\documentclass[a4paper,11pt]{amsart}

\usepackage{amsmath,amsfonts,amsthm,amssymb,ascmac,amscd}
\usepackage{bm}
\usepackage{graphicx}
\usepackage{enumerate}
\usepackage{overpic}
\usepackage{mathtools}
\usepackage{hyperref}
\usepackage{xcolor}
\hypersetup{
bookmarksnumbered=true, 
colorlinks=true, 
citecolor=black,
linkcolor=black,
urlcolor=black,
}
\numberwithin{equation}{section}
\usepackage{cleveref}
\crefname{definition}{Definition}{Definitions}
\Crefname{definition}{Definition}{Definitions}
\crefname{theorem}{Theorem}{Theorems}
\Crefname{theorem}{Theorem}{Theorems}
\crefname{lemma}{Lemma}{Lemmas}
\Crefname{lemma}{Lemma}{Lemmas}
\crefname{corollary}{Corollary}{Corollaries}
\Crefname{corollary}{Corollary}{Corollaries}
\crefname{proposition}{Proposition}{Propositions}
\Crefname{proposition}{Proposition}{Propositions}
\crefname{example}{Example}{Examples}
\Crefname{example}{Example}{Examples}
\crefname{remark}{Remark}{Remarks}
\Crefname{remark}{Remark}{Remarks}
\crefname{question}{Question}{Questions}
\Crefname{question}{Question}{Questions}
\crefname{equation}{}{}
\Crefname{equation}{}{}
\crefname{figure}{Figure}{Figures}
\Crefname{figure}{Figure}{Figures}
\theoremstyle{definition}
\newtheorem{theorem}{Theorem}[section]
\newtheorem*{theorem*}{Theorem}
\newtheorem{definition}[theorem]{Definition}
\newtheorem*{definition*}{Definition}
\newtheorem{proposition}[theorem]{Proposition}
\newtheorem*{proposition*}{Proposition}

\newtheorem*{example*}{Example}
\newtheorem{remark}[theorem]{Remark}
\newtheorem*{remark*}{Remark}

\newtheorem*{recall*}{Recall}
\newtheorem{lemma}[theorem]{Lemma}
\newtheorem*{lemma*}{Lemma}
\newtheorem{corollary}[theorem]{Corollary}
\newtheorem*{corollary*}{Corollary}

\newtheorem*{question*}{Question}

\newtheorem*{conjecture*}{Conjecture}

\newtheorem*{exercise*}{Exercise}
\newtheorem{claim}[theorem]{Claim}
\newtheorem*{claim*}{Claim}

\newtheorem*{fact*}{Fact}
\newtheorem{theorema}{Theorem}
\newtheorem*{theorema*}{Theorem}

\newtheorem{corollarya}[theorema]{Corollary}
\newtheorem*{corollarya*}{Theorem}
\newcommand{\Z}{\mathbb{Z}}

\DeclareMathOperator{\id}{id}

\newcommand{\Teich}{\mathcal{T}}
\newcommand{\elTeich}{\hat{\mathcal{T}}}
\DeclareMathOperator{\Ext}{Ext}
\DeclareMathOperator{\Area}{Area}
\DeclareMathOperator{\Mod}{Mod}
\newcommand{\Thin}{\mathrm{Thin}}
\newcommand{\multicurve}[1]{\mathcal{C}^{[#1]}}
\newcommand{\interpolate}[1]{\mathcal{P}^{[#1]}}

\allowdisplaybreaks[4]
\title[Electric Teichmüller spaces and $k$-multicurve graphs]{Electric Teichmüller spaces and $k$-multicurve graphs}
\author{Kento Sakai}
\address{Graduate School of Mathematical Sciences, The University of Tokyo, 3-8-1, Komaba, Meguro-ku, Tokyo, 153-8914, Japan}
\email{kento@ms.u-tokyo.ac.jp}
\date{\today}
\begin{document}

\begin{abstract}
    Masur and Minsky showed that the curve graph is quasi-isometric to the Teichmüller space electrified  along its thin part, and hence the Teichmüller space is weakly relatively hyperbolic with respect to the thin part. 
    In this paper, we extend this result to the $k$-multicurve graph by electrifying the Teichmüller space along the thin part where the extremal length of $k$ curves is sufficiently small. 
    A key ingredient is a bound on the $k$-multicurve graph distance in terms of the intersection number, which is obtained by adapting the upper bound for the pants graph due to Lackenby and Yazdi.
\end{abstract}
\maketitle

\section{Introduction}

The \textit{Teichmüller space} of a surface is the space of conformal (equivalently, hyperbolic) structures on the surface, up to isotopy.
In low-dimensional topology, the mapping class group is a fundamental object, and it is often studied in close connection with Teichmüller space.
To describe the large-scale and asymptotic geometry of Teichmüller space, a number of quasi-isometric combinatorial models have been proposed.
The \textit{curve graph}, introduced by Harvey \cite{harvey1981boundary}, has been a topic of interest in low-dimensional topology. 
The vertices of the curve graph are isotopy classes of essential simple closed curves on the surface, and two vertices are joined by an edge if the corresponding curves can be realized disjointly.
Masur and Minsky \cite{masur1999complex} showed that the curve graph is a hyperbolic metric space.

Masur and Minsky also proved that the curve graph is quasi-isometric to the metric space obtained by electrifying (or coning off) Teichmüller space along its thin part \cite[Theorem~1.2]{masur1999complex}.
Here the thin part consists of those Riemann surfaces on which the extremal length of some essential simple closed curve is at most a fixed constant $\epsilon_0>0$.
Each component of the thin part is quasi-isometric to a product of metric spaces \cite{minsky1996product}, and hence provides an obstruction to the Teichmüller space being hyperbolic.
In particular, Teichmüller space is weakly relatively hyperbolic with respect to the thin part.

In this paper, we extend \cite[Theorem~1.2]{masur1999complex} to the \emph{$k$-multicurve graph}, introduced in \cite{erlandsson2017multicurve}.
Let $\Sigma$ be a surface of genus $g$ with $n$ punctures.
The vertices of the $k$-multicurve graph $\multicurve{k}(\Sigma)$ are $k$-multicurves on $\Sigma$; that is, multicurves consisting of exactly $k$ pairwise disjoint curves, and two distinct vertices are joined by an edge if the corresponding multicurves contain the same $(k-1)$-multicurve and the remaining two complementary curves intersect  minimally (see \cref{definition:k-multicurve} for details).
For a $k$-multicurve $\alpha$, let $\Thin_\alpha$ be the subset of the Teichmüller space $\Teich(\Sigma)$ consisting of Riemann surfaces on which each component of $\alpha$ has the extremal length at most $\epsilon_0$.
Let $\elTeich^{k}(\Sigma)$ denote the metric space obtained by electrifying $\Teich(\Sigma)$ along the collection $\{\Thin_\alpha\}_{\alpha\in \multicurve{k}(\Sigma)}$.
The main purpose of this paper is to show the following.
\begin{theorema}\label{theoremA}
The electrified Teichmüller space $\elTeich^{k}(\Sigma)$ is quasi-isometric to the $k$-multicurve graph $\multicurve{k}(\Sigma)$.
\end{theorema}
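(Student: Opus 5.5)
Following the strategy of Masur and Minsky, I will build coarsely Lipschitz maps in both directions and show they are coarse inverses.

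For $\Phi\colon\multicurve{k}(\Sigma)\to\elTeich^{k}(\Sigma)$, send $\alpha$ to any point of $\Thin_\alpha$. The choice is irrelevant up to bounded error, since $\Thin_\alpha$ has diameter at most $1$ in the electrified metric. If $\alpha,\beta$ are adjacent, they share a $(k-1)$-multicurve $\gamma$, and the remaining curves $a,b$ intersect minimally (once or twice). I would pick a surface on which $\gamma$ is short, and then shorten $a$ and $b$ in turn inside the complementary subsurface; this is possible because that subsurface has complexity one. Moving along this path, with $\gamma$ kept short, gives a point of $\Thin_\alpha$ and a point of $\Thin_\beta$ at uniformly bounded Teichmüller distance. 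Hence $\Phi$ is coarsely Lipschitz.

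For $\Psi\colon\elTeich^{k}(\Sigma)\to\multicurve{k}(\Sigma)$, send a Riemann surface $X$ to a $k$-multicurve consisting of $k$ disjoint curves of bounded extremal length. Such a multicurve exists because we can take $k$ curves of a Bers short pants decomposition, whose lengths are bounded by the Bers constant $L$. Points of $\Thin_\alpha$ may be sent to $\alpha$ itself, so $\Psi\circ\Phi$ is the identity.

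The main step is to show that $\Psi$ is coarsely Lipschitz. There are two kinds of moves in the electrified metric.
\begin{enumerate}
\item If $d_{\Teich}(X,Y)\le 1$, then Kerckhoff's formula shows that the chosen multicurves $\Psi(X)$ and $\Psi(Y)$ have bounded extremal length on a common surface. Their intersection number is then bounded by Minsky's inequality $i(\alpha,\beta)^2\le \Ext(\alpha)\Ext(\beta)$.
\item If $X,Y$ both lie in $\Thin_\alpha$, then $\Psi(X)$ and $\Psi(Y)$ each have bounded intersection with $\alpha$, and $\alpha$ itself is a legitimate choice for both.
\end{enumerate}
In either case, we must bound $d_{\multicurve{k}}$ by a function of the intersection number. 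The abstract announces exactly this key ingredient: a bound on $d_{\multicurve{k}}(\alpha,\beta)$ in terms of $i(\alpha,\beta)$, proved by adapting Lackenby and Yazdi's argument for the pants graph. I assume it is established in the body of the paper before the proof of the theorem. If it must be proved here, I would first extend $\alpha$ and $\beta$ to pants decompositions $P\supset\alpha$ and $Q\supset\beta$ with bounded intersection. The Lackenby–Yazdi bound then gives a pants-graph path $P=P_0,\dots,P_m=Q$ of length controlled by $i(P,Q)$. I would project this path to $k$-multicurves by choosing $k$-submulticurves $\alpha_j\subset P_j$. Consecutive choices differ by at most one elementary move, so they are adjacent in $\multicurve{k}$, or joined by a bounded path when the changed curve is outside the chosen subset and we swap in a disjoint curve.

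Finally, $\Phi\circ\Psi(X)$ lies within bounded electrified distance of $X$. Indeed, $X$ has $\Psi(X)$ of bounded length, and shrinking those $k$ curves to length $\epsilon_0$ (by a Fenchel–Nielsen pinching path, for example) costs bounded Teichmüller distance depending only on $L$ and $\epsilon_0$. This gives quasi-surjectivity and completes the quasi-isometry. I expect the main obstacle to be the intersection-number bound for $\multicurve{k}$, specifically ensuring that pants moves project to bounded-length paths in $\multicurve{k}$ when $k$ is large and the complementary subsurfaces are disconnected.
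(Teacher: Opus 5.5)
Your proposal is correct and follows essentially the paper's route: the map $\alpha\mapsto v_\alpha$ is coarsely Lipschitz by checking adjacent pairs. The coarse inverse $X\mapsto$ (a short $k$-multicurve) is controlled by Kerckhoff's formula, Minsky's inequality and the quadratic intersection-number bound on $d_{\multicurve{k}}$, which the paper proves exactly as you sketch: extend to pants decompositions with controlled intersection, then project a Lackenby--Yazdi pants path. Quasi-density comes from pinching a short $k$-multicurve.

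Two small corrections. First, for $k<\xi(\Sigma)$, adjacency means the two differing curves are \emph{disjoint}, so $\Thin_\alpha\cap\Thin_\beta\neq\emptyset$ directly; your ``once or twice, complexity one'' picture only describes $k=\xi(\Sigma)$, though your argument still goes through. Second, the pinching is done more cleanly along the Teichm\"uller ray in the direction $-q_\alpha$, where Kerckhoff's formula gives $\Ext_{X_t}(\alpha)=e^{-2t}\Ext_X(\alpha)$; this is what the paper does. A Fenchel--Nielsen pinching path would instead need a separate estimate to get a uniformly bounded cost.
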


Vokes introduced the notion of \emph{twist-freeness} for graphs of multicurves, and proved that any twist-free graph of multicurves is a hierarchically hyperbolic space \cite{vokes2022hirarchical}. 
Consequently, the general theory of hierarchically hyperbolic spaces \cite{behrstock2017hierarchyI,behrstock2019hierarchyII} yields many large-scale geometric properties of twist-free graphs of multicurves \cite{vokes2022hirarchical,russel2022thickness}. 
In particular, hyperbolicity and quasi-flat rank are determined by the number of pairwise disjoint \emph{witnesses} for the graph of multicurves, where a witness is a connected subsurface that essentially intersects every vertex of the graph of multicurves.
In \cite{kuno2026interpolating}, an explicit formula is given for the maximal number $m(g,n,k)$ of pairwise disjoint witnesses for $\multicurve{k}(\Sigma_{g,n})$:

\begin{equation*}
    m(g,n,k)=
        \begin{cases}
            \min \left\{\left\lfloor \frac{-\chi(\Sigma)}{a(\xi(\Sigma)+1-k)}\right\rfloor, \left\lfloor \frac{\xi(\Sigma)+1}{\xi(\Sigma)+2-k}\right\rfloor \right\} & (n,k)\neq (0,1) \\
            1 & (n,k)=(0,1),
        \end{cases}
\end{equation*}
where $a(x)= \lceil (2x+1)/3 \rceil$, $\chi(\Sigma)=2-2g-n$ and $\xi(\Sigma)=3g-3+n$.
As an immediate consequence, we obtain the following.

\begin{corollarya}
If $m(g,n,k)=1$, then $\Teich(\Sigma)$ is weakly relatively hyperbolic with respect to the collection $\{\Thin_\alpha\}_{\alpha\in\multicurve{k}(\Sigma)}$.
\end{corollarya}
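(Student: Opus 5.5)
The corollary follows by combining \cref{theoremA} with the hierarchically hyperbolic structure of $\multicurve{k}(\Sigma)$. The plan has three steps, taken in this order.

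First, I check that $\multicurve{k}(\Sigma)$ is a twist-free graph of multicurves in the sense of Vokes. Two adjacent vertices differ in a single curve, and the two differing curves intersect minimally. No vertex's curves can therefore be twisted around an annulus while staying at bounded distance, so annuli are not witnesses. Vokes' theorem \cite{vokes2022hirarchical} then makes $\multicurve{k}(\Sigma)$ a hierarchically hyperbolic space whose domains are the witnesses. Here I should also confirm that the graph is connected and that the $\Thin_\alpha$ are the intended peripheral sets. Connectedness is implicit in \cref{theoremA}, which asserts a quasi-isometry between it and a connected metric space.

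Second, I invoke the HHS criterion for hyperbolicity. An HHS is hyperbolic exactly when it has no pair of orthogonal domains with both associated spaces unbounded. For graphs of multicurves, orthogonality of witnesses means disjointness, and every witness carries an unbounded curve graph. The hypothesis $m(g,n,k)=1$ says there is no pair of disjoint witnesses, so $\multicurve{k}(\Sigma)$ is Gromov hyperbolic. This is the statement recorded in \cite{vokes2022hirarchical,russel2022thickness}: the graph is hyperbolic iff $m=1$, with the formula for $m$ taken from \cite{kuno2026interpolating}.

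Third, I transfer hyperbolicity. By \cref{theoremA}, $\elTeich^{k}(\Sigma)$ is quasi-isometric to $\multicurve{k}(\Sigma)$. Hyperbolicity is a quasi-isometry invariant among geodesic spaces, so I must make sure the electrified space is geodesic, or at least a length space up to quasi-isometry. The cone-off construction with unit-length edges to cone points over each $\Thin_\alpha$ gives a geodesic space, because Teichmüller space is proper and geodesic. Hence $\elTeich^{k}(\Sigma)$ is hyperbolic, which is by definition the weak relative hyperbolicity of $\Teich(\Sigma)$ with respect to $\{\Thin_\alpha\}$.

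The main obstacle is the first step: checking carefully that the $k$-multicurve graph satisfies Vokes' twist-free hypotheses, and that its witnesses are exactly the subsurfaces counted by $m(g,n,k)$. If those facts are already established in \cite{kuno2026interpolating,vokes2022hirarchical}, I would cite them directly. The rest is formal.
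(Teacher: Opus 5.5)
Your proposal is correct and matches the paper, which obtains this corollary as an immediate consequence of \Cref{theoremA} combined with Vokes' theorem \cite{vokes2022hirarchical}: twist-free graphs of multicurves are hierarchically hyperbolic, and are hyperbolic exactly when there are no two disjoint witnesses, with $m(g,n,k)$ computed in \cite{kuno2026interpolating}. Two small corrections to steps you flag: annuli fail to be witnesses simply because every curve is a component of some $k$-multicurve, so your twisting heuristic is not needed; and for transferring hyperbolicity it is enough that $\elTeich^k(\Sigma)$ is a length space, so your properness argument for geodesicity is unnecessary.
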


\begin{corollarya}\label{corollaryC}
    Let $\elTeich^k(\Sigma_{g,n})$ denote the electrified Teichmüller space along the collection $\{\Thin_\alpha\}_{\alpha\in\multicurve{k}(\Sigma)}$.
    \begin{enumerate}
        \item \textbf{Hyperbolic case.} 
        $\elTeich^k(\Sigma_{g,n})$ is hyperbolic if and only if $m(g,n,k)=1$.
        \item \textbf{Relatively hyperbolic case.} $\elTeich^k(\Sigma_{g,n})$ is relatively hyperbolic if and only if 
        \begin{itemize}
            \item $g$ is even, $n$ is even and at least $2$, and $k=(3g+n)/2$,
            \item $g$ is even, $n=0$, and $k\in\{3g/2,(3g+2)/2\}$,
            \item $g$ is odd, $n\in\{0,2\}$, and $k=(3g+3)/2$, or 
            \item $g$ is odd, $n$ is odd and at least $3$, and $k=(3g+n)/2$.
        \end{itemize}
        \item \textbf{Thick case.} $\elTeich^k(\Sigma_{g,n})$ is thick if and only if neither (1) nor (2) holds.
    \end{enumerate} 
\end{corollarya}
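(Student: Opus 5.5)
Since $\elTeich^k(\Sigma)$ is quasi-isometric to $\multicurve{k}(\Sigma)$ by \cref{theoremA}, and hyperbolicity, relative hyperbolicity and thickness are all quasi-isometry invariants (for relative hyperbolicity and thickness this uses the theorems of Behrstock--Druţu--Mosher that thickness obstructs relative hyperbolicity and that relative hyperbolicity with respect to non-relatively hyperbolic peripherals is quasi-isometry invariant), the plan is to prove the trichotomy for $\multicurve{k}(\Sigma_{g,n})$ and transport it back. The first step is therefore to check that $\multicurve{k}(\Sigma)$ is a twist-free graph of multicurves in Vokes' sense. No annulus is a witness, because any $k$-multicurve can be chosen disjoint from a given annulus's core after moving one component. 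Hence it is a hierarchically hyperbolic space whose domains are the witnesses.

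\textbf{Hyperbolic case.} By the HHS theory \cite{behrstock2019hierarchyII}, together with Vokes' criterion \cite{vokes2022hirarchical}, the graph is hyperbolic if and only if there are no two disjoint witnesses. That is, it is hyperbolic exactly when $m(g,n,k)=1$, using the formula from \cite{kuno2026interpolating}.

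\textbf{Thick and relatively hyperbolic cases.} Here I would invoke Russell's results \cite{russel2022thickness} for twist-free graphs of multicurves. Such a graph is relatively hyperbolic precisely when the pairs of disjoint witnesses are isolated: every pair of disjoint witnesses is a pair of complementary subsurfaces (in particular $m=2$), and no witness overlaps two such pairs in a way that chains product regions. Otherwise, when $m\ge 2$, it is thick of order one. The main combinatorial obstacle is then to classify when $m(g,n,k)=2$ and the disjoint witness pairs are rigid in this sense. For this I would proceed as follows.
\begin{enumerate}
\item Characterize witnesses for $\multicurve{k}$: a subsurface $W$ is a witness if and only if $\xi(\Sigma\setminus W)$, counting boundary curves, leaves fewer than $k$ disjoint curves. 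In other words, the number of curves in a pants decomposition of the complement of $W$, together with $\partial W$, is less than $k$.
\item Determine when two disjoint witnesses $W_1,W_2$ exist. This forces each $W_i$ to have complexity at least $\xi(\Sigma)+1-k$.
\item Determine when the witnesses can be "slid" to form a chain of overlapping product regions, which yields thickness.
\end{enumerate}
When the equality cases are tight, i.e.\ when the two witnesses must exactly fill $\Sigma$ separated by a single multicurve, the configurations are isolated. A case analysis on the parity of $g$ and $n$ (balancing Euler characteristic between the two halves, via the function $a(x)$) produces exactly the four listed families. When there is slack, one can modify one witness while keeping disjointness from an overlapping third witness, which gives thick connectivity of the product regions.

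I expect the last case analysis, especially verifying thickness in the non-listed $m=2$ cases and excluding $m\ge 3$ from relative hyperbolicity, to be the main work. The first two parts are formal consequences of \cref{theoremA} and the cited HHS results.
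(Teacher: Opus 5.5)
Your reduction matches the paper's. The paper gives no separate argument; it presents the corollary as an immediate consequence of \Cref{theoremA}, quasi-isometry invariance, and the existing analysis of $\multicurve{k}(\Sigma)$ via Vokes' twist-free HHS structure \cite{vokes2022hirarchical}, the trichotomy of \cite{russel2022thickness}, and the witness computations of \cite{kuno2026interpolating}.

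Your part (1) is fine. The gap is in (2) and (3). Their only non-formal content is \emph{which} triples $(g,n,k)$ are relatively hyperbolic, and you assert this rather than derive it.

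To derive it, replace your informal ``no chaining'' condition with the actual criterion of \cite{russel2022thickness}. In this setting it amounts to the following: the graph is relatively hyperbolic exactly when a pair of disjoint witnesses exists and every pair of disjoint co-connected witnesses is complementary (the complement of their union is a union of annuli); otherwise it is thick. This gives thickness, not thickness of order one. It also makes your separate worry about $m\ge 3$ unnecessary, since three disjoint witnesses already leave Euler characteristic slack for a pair.

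Then you must actually compute. Put $c=\xi(\Sigma)+1-k$, so witnesses are the connected $W=\Sigma_{h,b}$ with $\xi(W)\ge c$. Since $-\chi(W)=\xi(W)-h+1$ and $h_1+h_2\le g$, any disjoint pair satisfies $-\chi(W_1)-\chi(W_2)\ge\max\{2a(c),\,2c+2-g\}$. Moreover $-\chi(\Sigma)-(2c+2-g)=\xi(\Sigma)-2c-1$. One finds that all pairs are forced to be complementary exactly when one of these bounds already equals $-\chi(\Sigma)$. Otherwise one witness can be moved inside the complement of the other. So the relatively hyperbolic cases are $k=(3g+n)/2$ or $2a(\xi(\Sigma)+1-k)=2g-2+n$, subject to a disjoint pair existing.

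Carried out, this computation does \emph{not} return the printed list, so your claim that the case analysis ``produces exactly the four listed families'' would fail. The third bullet is wrong for $n=0$.

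\begin{itemize}
\item For $(g,n,k)=(3,0,6)$, the graph $\multicurve{6}(\Sigma_{3,0})$ is the pants graph of the closed genus-$3$ surface. This graph is quasi-isometric to Weil--Petersson Teichm\"uller space \cite{brock2003weil}. It contains three disjoint one-holed tori, so $m(3,0,6)=3$. It is thick by Behrstock--Dru\c{t}u--Mosher (Weil--Petersson Teichm\"uller space is thick once $\xi\ge 6$), hence not relatively hyperbolic.
\item More generally, for $g$ odd, $n=0$, $k=(3g+3)/2$, one has $c=(3g-7)/2$. Two disjoint copies of $\Sigma_{(g-1)/2,1}$ are witnesses and leave a $\Sigma_{1,2}$ between them, so this case is thick.
\item The tight value is $k=(3g+1)/2$. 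There $2a(c)=2g-2=-\chi(\Sigma)$, and the only disjoint pairs are two copies of $\Sigma_{(g-1)/2,2}$ glued along two curves (e.g.\ $\Sigma_{1,2}\cup\Sigma_{1,2}$ for $(3,0,5)$, where $m=2$). That case is relatively hyperbolic but is not listed.
\end{itemize}

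The other three bullets, and the $n=2$ case of the third, agree with the computation; the third bullet should read $k=(3g+n+1)/2$ for $n\in\{0,2\}$. So the deferred case analysis is where all the content lies. It has to be carried out, or imported from \cite{kuno2026interpolating}, and it cannot confirm the statement exactly as printed.
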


\begin{corollarya}
    The quasi-flat rank of $\elTeich^k(\Sigma_{g,n})$ is equal to $m(g,n,k)$.
\end{corollarya}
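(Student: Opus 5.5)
The quasi-flat rank statement will follow from \cref{theoremA} together with Vokes' theorem and the formula for $m(g,n,k)$ from \cite{kuno2026interpolating}.

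First, quasi-flat rank is a quasi-isometry invariant. It is defined as the supremum of $r$ such that there is a quasi-isometric embedding $\mathbb{R}^r\to X$; composing such an embedding with a quasi-isometry gives a quasi-isometric embedding into the other space. By \cref{theoremA}, the rank of $\elTeich^k(\Sigma_{g,n})$ therefore equals that of $\multicurve{k}(\Sigma_{g,n})$, and it suffices to compute the latter.

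Second, I will check that $\multicurve{k}(\Sigma)$ is a twist-free graph of multicurves in the sense of Vokes. The point is that an annulus is never a witness. Given a curve $\gamma$, choose a $k$-multicurve containing $\gamma$, using $1\le k\le\xi(\Sigma)$. This vertex is disjoint from the annulus around $\gamma$, in the sense that it does not essentially intersect it. The same reasoning verifies Vokes' hypotheses on the vertex set and edges: edges are defined by local moves of bounded intersection. With this, \cite{vokes2022hirarchical} makes $\multicurve{k}(\Sigma)$ a hierarchically hyperbolic space whose index set is the set of witnesses, with nesting given by inclusion and orthogonality given by disjointness.

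Third, I will apply the rank theorem for hierarchically hyperbolic spaces from \cite{behrstock2017hierarchyI,behrstock2019hierarchyII}. It says the quasi-flat rank equals the maximal number of pairwise orthogonal domains whose associated hyperbolic spaces are unbounded.
\begin{itemize}
\item For the upper bound, such a family is a family of pairwise disjoint witnesses, so its size is at most $m(g,n,k)$.
\item For the lower bound, I need each witness $W$ to have an unbounded associated space. That space is the curve graph of $W$, possibly electrified in the Vokes construction; I must confirm it is still unbounded. Since $W$ is not an annulus, a pseudo-Anosov on $W$ acts loxodromically on its curve graph, and the projections of vertices of $\multicurve{k}(\Sigma)$ to $W$ are coarsely the curve-graph projections. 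So each of the $m(g,n,k)$ disjoint witnesses contributes an unbounded factor. Their product quasi-isometrically embeds, and this gives a flat of rank $m(g,n,k)$.
\end{itemize}
Substituting the formula from \cite{kuno2026interpolating} then gives the stated equality.

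The main obstacle is the second step: checking every hypothesis of Vokes' framework for $\multicurve{k}$ in the extreme cases $k=1$ and $k=\xi(\Sigma)$. When $k=\xi(\Sigma)$ we get the pants graph, which Vokes treats directly, and when $k=1$ the curve graph. If these ends cause trouble, I would instead cite \cite{kuno2026interpolating}, where the twist-free hierarchically hyperbolic structure of $\multicurve{k}$ is presumably already used to compute $m(g,n,k)$, and conclude from the rank theorem plus \cref{theoremA}.
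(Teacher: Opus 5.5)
Your proposal is correct and follows the paper's route: quasi-isometry invariance of quasi-flat rank plus the main theorem ($\elTeich^k(\Sigma)\simeq\multicurve{k}(\Sigma)$) reduce the claim to $\multicurve{k}(\Sigma)$. Its rank is the maximal number of pairwise disjoint witnesses, by Vokes' hierarchical hyperbolicity of twist-free graphs of multicurves and the HHS rank theorem, and \cite{kuno2026interpolating} identifies that number as $m(g,n,k)$. The paper treats this as immediate; your checks of twist-freeness and of the lower bound via pseudo-Anosovs supported on disjoint witnesses only unpack what Vokes already records as a corollary.
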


To prove \Cref{theoremA}, we establish an upper bound on the distance between two vertices of $\multicurve{k}(\Sigma)$ in terms of the intersection number of the corresponding multicurves.
\begin{theorema}\label{theorema:E}
    For any $k$-multicurves $\alpha,\beta\in\multicurve{k}(\Sigma)$, we have 
    \begin{equation*}
        d_{\multicurve{k}}(\alpha,\beta)\leq 6\cdot 4^{6g-6+2n-2k}i(\alpha,\beta)^2+f(k),
    \end{equation*}
    where $f(k)=\min\{k,\xi(\Sigma)-k\}$.
\end{theorema}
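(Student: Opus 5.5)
We adapt the Lackenby--Yazdi argument for the pants graph. There the key inputs are a surgery procedure that decreases intersection with a fixed multicurve, and the fact that pants decompositions sharing all but one curve are connected by a path whose length is controlled by intersection numbers. The plan has three steps: reduce to the case where $\alpha$ and $\beta$ are maximally far from sharing curves, perform surgery on a curve of $\alpha$ along arcs of $\beta$, and account for the additive term $f(k)$ at the end.

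\textbf{Step 1 (reduction to curves of $\beta$ relative to $\alpha$).} Write $\alpha=\{a_1,\dots,a_k\}$ and $\beta=\{b_1,\dots,b_k\}$. If $i(\alpha,\beta)=0$, the union $\alpha\cup\beta$ is a multicurve. Then $d(\alpha,\beta)\le f(k)$, by swapping curves one at a time. When $k\le \xi/2$, each move replaces one curve of $\alpha\setminus\beta$ by one of $\beta\setminus\alpha$, passing through an intermediate disjoint curve if necessary. When $k>\xi/2$, we argue dually via the complement: $\alpha$ and $\beta$ differ in at most $\xi-k$ curves, because both lie in a common pants decomposition with $\xi$ curves and share at least $2k-\xi$ of them. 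This step produces the $f(k)$ term.

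\textbf{Step 2 (surgery decreasing intersection).} Assume $i(\alpha,\beta)>0$ and choose a component $a\in\alpha$ with $i(a,\beta)>0$. Work in the complement $S'=\Sigma\setminus(\alpha\setminus a)$, whose complexity is $\xi-(k-1)$. In $S'$ we need a curve $a'$ with three properties:
\begin{itemize}
\item $a'$ is disjoint from $\alpha\setminus a$;
\item $a'$ meets $a$ minimally in $S'$, so that $\alpha\to(\alpha\setminus a)\cup a'$ is an edge or a bounded path;
\item $i(a',\beta)$ is substantially smaller than $i(a,\beta)$, for instance at most half.
\end{itemize}
Following Lackenby--Yazdi, we obtain such an $a'$ by taking an arc of $\beta\cap S'$ with both endpoints on $a$, or on a component of $\partial S'$, and doing surgery. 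The intermediate curves produced by the surgery need not meet $a$ minimally. We therefore connect $a$ to $a'$ inside the curve graph of $S'$, which coincides with the $1$-multicurve graph of the complexity-one piece containing $a$, or more generally within the relevant subsurface. The length of this connection is bounded by a constant exponential in $\xi(S')$; this is where the factor $4^{6g-6+2n-2k}=4^{2(\xi-k)}$ comes from. The exponent suggests that each surgery or "halving" step costs a factor related to the number of arcs types in a surface of complexity about $\xi-k$.

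\textbf{Step 3 (summing).} We iterate the surgery until the intersection becomes zero, then apply Step 1. The number of surgeries needed is bounded by the total intersection $i(\alpha,\beta)$, since each surgery reduces it by at least one. Each surgery contributes a path of length at most $C\, i(\alpha,\beta)$, where $C=6\cdot4^{2(\xi-k)}$, coming from the minimal-intersection connections in Step 2. This yields the quadratic bound $6\cdot4^{6g-6+2n-2k}i(\alpha,\beta)^2+f(k)$.

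\textbf{Main obstacle.} The hardest part is Step 2: guaranteeing that the surgered curve meets $a$ minimally, as the edge relation requires, while keeping the number of intermediate moves linear in $i$ with a constant depending only on $\xi-k$. The first approach to try is to argue inside the complementary subsurface of $\alpha\setminus a$, which has complexity at most $\xi-k+1$. Its arc and curve combinatorics are bounded, and one can mimic the Lackenby--Yazdi pants-graph estimate there, using twisting around $a$ only a bounded number of times.
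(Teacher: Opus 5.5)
Your Step 1 is fine, but Steps 2 and 3 do not constitute an argument. You never actually construct the curve $a'$. Step 3 then rests on two claims: that each surgery lowers the intersection by at least one, and that each surgery is paid for by a path of length at most $6\cdot 4^{2(\xi-k)}\,i(\alpha,\beta)$. Both are read off from the target inequality rather than derived. They also conflict with each other: Step 2 says the connecting path has length bounded by a constant depending only on $\xi(S')$, which would give a linear bound, not $C\,i$.

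The surgery itself does not work in general. Arcs of $\beta$ in the complement of $\alpha$ typically run between different components of $\alpha$. A curve built from an arc with endpoints on $\partial S'$ need not be disjoint from $a$ and need not meet $\beta$ less than $a$ does. It may also be peripheral in $S'$ or parallel to a curve of $\alpha\setminus a$. Joining $a$ to $a'$ in the curve graph of $S'$ costs an amount governed by $i(a,a')$, not by a topological constant.

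Most seriously, at $k=\xi$ we have $f(\xi)=0$, and the statement is exactly the Lackenby--Yazdi bound $d_{\mathcal P}\le 6\,i^2$. There $a$ lies in a complexity-one piece of $S'$, in which the only curve disjoint from $a$ is $a$ itself. So your surgery never yields an adjacent vertex, and the scheme cannot produce this case without reproving Lackenby--Yazdi. Note also that their quadratic bound is not proved by surgery at all, but via pre-triangulations and adjacent transpositions of edge orderings. The paper remarks explicitly that Hempel-style surgery does not transfer to these graphs. Finally, the factor $4^{2(\xi-k)}$ has nothing to do with counting arc types.

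The missing idea is to reduce to the pants graph and use Lackenby--Yazdi as a black box.

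First, a non-maximal multicurve $\alpha$ can be enlarged by one curve while at most doubling its intersection with $\beta$. Take a non-pants complementary component $X$ of $\alpha$, and let $a'$ be one of the following:
\begin{enumerate}
\item a closed component of $\beta$ lying in $X$;
\item an essential curve of $X$ missing $\beta$;
\item an essential frontier curve of a regular neighbourhood of an arc of $\beta\cap X$ together with the boundary curves of $X$ on which it ends.
\end{enumerate}
Then $i(a',\beta)\le i(\alpha,\beta)$.

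Alternate this between $\alpha$ and $\beta$ for $\xi-k$ rounds. This gives pants decompositions $\tilde\alpha\supset\alpha$ and $\tilde\beta\supset\beta$ with $i(\tilde\alpha,\tilde\beta)\le 4^{\xi-k}i(\alpha,\beta)$. Hence $d_{\mathcal P}(\tilde\alpha,\tilde\beta)\le 6\cdot 4^{2(\xi-k)}i(\alpha,\beta)^2$, so $4^{6g-6+2n-2k}$ is just the square of this intersection blow-up.

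Then push a pants-graph geodesic $\tilde\alpha=P_0,\dots,P_m=\tilde\beta$ down to $\mathcal{C}^{[k]}$. Carry a $k$-subset $\gamma_j\subset P_j$, starting from $\gamma_0=\alpha$. Whenever a move deletes a curve of $\gamma_{j-1}$, replace it by a curve of $P_j$. For $k<\xi$, choose that curve in $P_{j-1}\cap P_j$, so it is disjoint from the deleted one. This costs at most one edge per move. At the end, $\gamma_m$ and $\beta$ are both $k$-subsets of $\tilde\beta$, hence at most $f(k)$ swaps apart, which is exactly your Step 1.
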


Our proof of \Cref{theorema:E} uses the quadratic upper bound on pants graph distance in terms of intersection number due to Lackenby and Yazdi \cite{lackenby2024bounds}.
In the latter part of \cite{lackenby2024bounds}, they also obtain a sharper bound that grows only logarithmically with the intersection number.
By using this, one can also derive an upper bound of the $k$-multicurve graph distance in terms of a logarithmic function of the intersection number; however, the quadratic upper bound suffices to show \Cref{theoremA}, and so we adopt it for simplicity.

Finally, we briefly discuss related work.
Mahan Mj \cite{mj2009interpolating} constructed a graph interpolating between the curve graph and the pants graph.
The graph is called the \textit{complexity-$\xi$} graph, and we denote it by $\interpolate{\xi}(\Sigma)$.
He showed that the graph is quasi-isometric to the electrified Cayley graph $\Gamma(\Sigma,\xi)$ of the mapping class group $\Mod(\Sigma)$, where the electrifying is taken over left cosets of mapping class subgroups associated to subsurfaces of complexity at most $\xi$.
This extends \cite[Theorem~1.3]{masur1999complex}.
Mahan Mj also determined the quasi-flat ranks of both the complexity-$\xi$ graph and the corresponding electrified Cayley graph.
From this perspective, our results may be viewed as a Teichmüller-space analogue of Mahan Mj's work on mapping class groups.

Masur and Minsky \cite{masur2000complex} introduced the \emph{marking graph} $\mathcal{M}(\Sigma)$, which is quasi-isometric to the Cayley graph of the mapping class group $\Mod(\Sigma)$.
Durham \cite{durham2016augmented} defined the \emph{augmented marking graph} $\mathcal{AM}(\Sigma)$, which is quasi-isometric to Teichmüller space $\Teich(\Sigma)$ equipped with the Teichmüller metric.
Brock \cite{brock2003weil} proved that the pants graph $\mathcal{P}(\Sigma)$ is quasi-isometric to $\Teich(\Sigma)$ equipped with the Weil-Petersson metric.
We summarize these quasi-isometries in \Cref{table:QIMetricSpaces}. 
\begin{table}[h]
    \begin{center}
        \begin{tabular}{c c c} 
            \textbf{Multicurve graphs} & \textbf{MCG} & \textbf{Teichmüller space} \\ \hline
            $\mathcal{AM}(\Sigma)$ & \  & $\Teich(\Sigma)$ \\
            $\mathcal{M}(\Sigma)$  & $\Mod(\Sigma)$ & \  \\
            $\multicurve{\xi_0}(\Sigma)=\mathcal{P}(\Sigma)$  & $\Gamma(\Sigma,-1)$, $\Gamma(\Sigma,0)$ & $\Teich^{\mathrm{WP}}(\Sigma)$, $\elTeich^{\xi_0}(\Sigma)$ \\
            $\vdots$ & $\vdots$ & $\vdots$ \\
            $\multicurve{k}(\Sigma),\  \interpolate{\xi_0-k}(\Sigma)$ & $\Gamma(\Sigma,\xi_0-k)$
            & $\elTeich^{k}(\Sigma)$ \\
            $\vdots$ & $\vdots$ & $\vdots$ \\
            $\mathcal{C}(\Sigma)$, $\interpolate{\xi_0-1}(\Sigma)$ & $\Gamma(\Sigma,\,\xi_0-1)$ & $\elTeich^{1}(\Sigma)$ \\
        \end{tabular}
        \vspace{2mm}
    \end{center}
\caption{Here we set $\xi_0=3g-3+n$. The metric spaces in each row are mutually quasi-isometric.}
\label{table:QIMetricSpaces}
\end{table}

Fix nonnegative integers $g,n$ with $2g-2+n>0$, and let $k_0$ be the largest integer $k$ satisfying $m(g,n,k)=1$.
Then $k_0$ is approximately half of $3g-3+n$.
By \Cref{corollaryC}, every metric space appearing in \Cref{table:QIMetricSpaces} above the row corresponding to the $k_0$-multicurve graph is not hyperbolic.
On the other hand, the metric completion of Teichmüller space equipped with the Weil-Petersson metric is known to be a $\mathrm{CAT}(0)$ space \cite{yamada2004completion,wolpart2002completion}.
By contrast, Teichmüller space with the Teichmüller metric is not a $\mathrm{CAT}(0)$ space \cite{masur1975geodesic} (see also \cite{liu2020distance}).

\subsection*{Acknowledgements}
I would like to thank Mahan Mj.
This work is inspired by his answer for my question.
I am grateful to Nariya Kawazumi, Hidetoshi Masai, Ryo Matsuda, and Ken'ich Ohshika for their valuable comments and encouragement.
This work is supported by JSPS KAKENHI Grant Number	25KJ0069.

\section{Preliminaries}

\subsection{Metric geometry}
Let $(X,d_X)$ and $(Y,d_Y)$ be metric spaces.
A map $f\colon X\to Y$ is called a \emph{quasi-isometry} if the following conditions hold.
\begin{enumerate}
    \item There exist constants $K\geq 1, L>0$ such that, for any $x_1,x_2 \in X$, 
    \begin{equation*}
        \frac{1}{K} d_X(x_1,x_2)-L \leq d_Y(f(x_1),f(x_2)) \leq Kd_X(x_1,x_2)+L.
    \end{equation*}
    \item There exists $C>0$ such that every point of $Y$ lies within distance $C$ of $f(X)$.
\end{enumerate}
If a map $f\colon X\to Y$ satisfies the condition (1), it is called a \emph{quasi-isometric embedding}.
The \emph{quasi-flat rank} of a metric space $X$ is the largest integer $n$ for which there exists a quasi-isometric embedding $\Z^n\to X$.
\vspace{2mm}

We recall the definition of an electrified metric space, following \cite{farb1998relative}.
Let $X$ be a geodesic metric space, and let $\mathcal{H}=\{H_\alpha\}_{\alpha\in A}$ be a family of connected subsets of $X$.
For each $\alpha\in A$, we introduce a new point $v_\alpha$ and join every point of $H_\alpha$ to $v_\alpha$ with a segment of length $1/2$.
We denote the resulting space by $\hat{X}$, and equip $\hat{X}$ with the path metric $d_e$.
We refer to the construction of the metric space $(\hat{X},d_e)$ as \emph{electrifying} (or \emph{coning off}) $X$ along $\mathcal{H}$.

\subsection{Multicurve graphs}

Let $\Sigma = \Sigma_{g,n}$ be a surface of genus $g$ with $n$ punctures.
A simple closed curve on $\Sigma$ is said to be \emph{essential} if it is not homotopic to a point or to a puncture.
A \emph{$k$-multicurve} on $\Sigma$ is a set of isotopy classes of pairwise disjoint essential simple closed curves on $\Sigma$ whose cardinality is $k$, where $1 \le k \le 3g-3+n$.
In particular, a $(3g-3+n)$-multicurve on $\Sigma$ is called a \emph{pants decomposition} of $\Sigma$.

We define the $k$-multicurve graph $\multicurve{k}(\Sigma)$, which was introduced in \cite{erlandsson2017multicurve}.

\begin{definition}\label{definition:k-multicurve}
    The \emph{$k$-multicurve graph} $\multicurve{k}(\Sigma)$ is the graph whose vertices are $k$-multicurves on $\Sigma$.
    Two $k$-multicurves $\alpha$ and $\beta$ are joined by an edge if there exists a common $(k-1)$-multicurve $\nu \subset \alpha \cap \beta$ such that the two isotopy classes $\alpha \setminus \nu$ and $\beta \setminus \nu$ intersect minimally on $\Sigma \setminus \nu$.
    More precisely,
    \begin{itemize}
        \item if $k < 3g-3+n$, then $i(\alpha \setminus \nu, \beta \setminus \nu) = 0$;
        \item if $k = 3g-3+n$ and one of the connected components of
        $\Sigma \setminus \nu$ is a once-holed torus, then
        $i(\alpha \setminus \nu, \beta \setminus \nu) = 1$;
        \item if $k = 3g-3+n$ and one of the connected components of
        $\Sigma \setminus \nu$ is a four-holed sphere, then
        $i(\alpha \setminus \nu, \beta \setminus \nu) = 2$.
    \end{itemize}
\end{definition}

We endow the $k$-multicurve graph $\multicurve{k}(\Sigma)$ with the graph metric in which each edge has length $1$.
We denote the distance function of $\multicurve{k}(\Sigma)$ by $d_{\multicurve{k}}$.
Since the $(3g-3+n)$-multicurve graph is the same as the pants graph $\mathcal{P}(\Sigma)$,
the distance function of the $(3g-3+n)$-multicurve graph is often denoted by $d_{\mathcal{P}}$.

\subsection{Quadratic differentials and measured foliations}

We recall the construction of the measured foliation associated to a holomorphic quadratic differential (see, for example \cite{farb2012primer}).
Let $X$ be a Riemann surface homeomorphic to $\Sigma$, and
let  $q$ be a holomorphic quadratic differential on $X$ with finite $L^1$-norm.
Away from zeros of $q$, there is a holomorphic coordinate $\zeta=\xi+i\eta$ in which $q=d\zeta^2$.
Such a coordinate is uniquely determined up to $\zeta \mapsto \pm \zeta+c$, and hence the horizontal lines $\{\eta=\text{constant}\}$ define a foliation on $X$.
A zero of $q$ of order $m$ gives an $(m+2)$-pronged singularity of this foliation.
Moreover, the density $|d\eta|$ defines a transverse measure on the singular foliation of $q$, and the resulting measured foliation is called the \emph{horizontal measured foliation} of $q$ and denoted by $\mathcal{F}_h(q)$.
It is known that the map $q\mapsto \mathcal{F}_h(q)$ is a homeomorphism \cite{hubbard1979quadratic}.
Therefore, for each $k$-multicurve $\alpha$, we can choose a holomorphic quadratic differential $q_\alpha$ whose horizontal measured foliation corresponds to $\alpha$.

\subsection{Extremal length and Teichmüller space}
Let $\Sigma=\Sigma_{g,n}$  be a surface of genus $g$ with $n$ punctures.
We suppose that $2-2g-n<0$.
Let $X$ be a Riemann surface homeomorphic to $\Sigma$.
We define the \emph{extremal length} $\Ext_X(\alpha)$ of a multicurve $\alpha$ with respect to $X$ by
\begin{equation*}
    \Ext_X(\alpha) = \sup_{\rho} \frac{\ell_\rho(\alpha)^2}{\Area(\rho)},
\end{equation*}
where the supremum is taken over all conformal metrics $\rho$ on $X$.
It is well known that extremal length extends continuously to a function on the space of measured foliations on $\Sigma$.

The \textit{Teichmüller space} $\Teich(\Sigma)$ of $\Sigma$ is the space of isotopy classes of punctured Riemann surface structures on $\Sigma$.
Let $X, Y\in\Teich(\Sigma)$ be Riemann surfaces.
For each quasiconformal map $f\colon X\to Y$, the \textit{maximal dilatation $K(f)$} is defined by 
\begin{equation*}
    K(f)=\frac{1+\|\mu_f\|_\infty}{1-\|\mu_f\|_\infty},
\end{equation*}
where $\|\mu_f\|_\infty$ is the $L^\infty$-norm of the Beltrami differential $\mu_f$ of $f$.
Then we define a distance function $d_{\Teich}$ on $\Teich(\Sigma)$ as 
\begin{equation*}
    d_{\Teich}(X,Y)\coloneqq \frac12 \inf_{f\sim\id}\log K(f).
\end{equation*}
The distance $d_{\Teich}$ is called the \textit{Teichmüller distance} of $\Teich(\Sigma)$.
For any $X,Y\in \Teich(\Sigma)$, there exists a unique extremal map for maximal dilatation in the isotopy class of the identity map on $\Sigma$, which is called the \textit{Teichmüller map}.
Let $f \colon X \to Y$ be the Teichmüller map with dilatation $K$.
Then there exists a unique holomorphic quadratic differential $q$ on $X$ such that $f$ is represented by the map $\xi+i\eta \mapsto K\xi+i\eta$ in the canonical coordinates $\zeta = \xi+i\eta$ associated with $q$.
Then we denote $Y$ by $X_{q,K}$.
The one-parameter family $\{X_{q,e^{2t}}\}_{t \ge 0}$ of Riemann surfaces is called the \textit{Teichmüller geodesic} in the direction of $q$, which is parametrized by arc length with respect to the Teichmüller distance.

We now recall a formula for the Teichmüller distance, known as Kerckhoff's formula.

\begin{theorem}[Kerckhoff's formula \cite{kerckhoff1980asymptotic}]\label{theorem:KerckhoffFormula}
For any $X, Y \in \Teich(\Sigma)$, we have
\[
e^{-2 d_{\Teich}(X,Y)}
= \min_{\gamma \in \mathcal{MF}(\Sigma)}
\frac{\Ext_Y(\gamma)}{\Ext_X(\gamma)},
\]
where the minimum is attained by the vertical measured foliation of the holomorphic
quadratic differential associated with the Teichmüller map between $X$ and $Y$.
\end{theorem}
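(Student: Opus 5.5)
This is Kerckhoff's formula, quoted from \cite{kerckhoff1980asymptotic}; the paper uses it as a black box. If I were to reprove it, I would show two inequalities and then identify the minimizer.

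\emph{Lower bound.} For every $\gamma\in\mathcal{MF}(\Sigma)$ I want
\[
\Ext_Y(\gamma)\ge K^{-1}\Ext_X(\gamma), \qquad K=e^{2d_{\Teich}(X,Y)}.
\]
This is the quasi-invariance of extremal length under $K$-quasiconformal maps. Apply it to the Teichmüller map $f\colon X\to Y$, first for weighted simple closed curves using the sup-over-metrics definition. Pull back or push forward a conformal metric $\rho$ through $f$; lengths of curves change in a controlled way, and area changes by the Jacobian. This gives $\Ext_Y(\gamma)\ge K^{-1}\Ext_X(\gamma)$. Density of weighted curves and continuity of $\Ext$ on $\mathcal{MF}(\Sigma)$ then extend it to all $\gamma$.

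\emph{Equality for the vertical foliation.} Let $q$ be the quadratic differential on $X$ with $Y=X_{q,K}$, and let $\gamma_v=\mathcal F_v(q)$ be its vertical foliation, i.e.\ the one with transverse measure $|d\xi|$. By the Hubbard--Masur correspondence recalled above (applied to vertical foliations via $-q$), $q$ is the differential realizing $\gamma_v$ on $X$, so
\[
\Ext_X(\gamma_v)=\|q\|.
\]
On $Y$ the image differential is $q_K$, with coordinate $K\xi+i\eta$. Its vertical foliation has measure $K|d\xi|$, so it equals $K\gamma_v$. Also $\|q_K\|=K\|q\|$. Hence
\[
\Ext_Y(K\gamma_v)=K\|q\|, \qquad\text{so}\qquad \Ext_Y(\gamma_v)=K^{-1}\|q\|.
\]
The ratio is therefore exactly $K^{-1}=e^{-2d_{\Teich}(X,Y)}$. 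With the lower bound, the infimum is attained and is a minimum.

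\emph{Main obstacle.} The hard step is justifying $\Ext_X(\mathcal F(q))=\|q\|$, i.e.\ that the Hubbard--Masur differential is extremal for its foliation. I would derive it from the $L^1$ length–area argument: use the flat metric $|q|^{1/2}$ as a competitor, and combine Cauchy--Schwarz with the fact that leaves of the foliation realize the minimal measure. One must also keep track of the normalization, since horizontal versus vertical conventions swap which foliation is stretched.
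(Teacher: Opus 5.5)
The paper gives no proof of this statement and simply quotes it from Kerckhoff. Your outline is essentially Kerckhoff's own argument, and it is correct: quasi-invariance gives $\Ext_Y(\gamma)\ge K^{-1}\Ext_X(\gamma)$, which extends to $\mathcal{MF}(\Sigma)$ by density and continuity, and the explicit computation with the terminal differential gives $\Ext_Y(\mathcal F_v(q))=K^{-1}\|q\|$. Your normalizations all check out: $\mathcal F_v(q_K)=K\,\mathcal F_v(q)$, $\|q_K\|=K\|q\|$, and quadratic homogeneity of $\Ext$. The one substantive input is $\Ext_X(\mathcal F_v(q))=\|q\|$ for arbitrary $q$. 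The cleanest way to get it is the length--area argument for Jenkins--Strebel differentials (weighted multicurves), followed by continuity of $\mathcal F\mapsto\|q_{\mathcal F}\|$ from Hubbard--Masur. This is needed because, with the paper's definition, $\Ext$ on $\mathcal{MF}(\Sigma)$ is only the continuous extension from curves.
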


\subsection{Length of curves}

For any $X \in \Teich(\Sigma)$ and any $\gamma \in \mathcal{C}(\Sigma)$, let $\ell_X(\gamma)$ denote the length of the geodesic representative of $\gamma$ with respect to the hyperbolic metric in the conformal class determined by $X$.
Bers proved the following theorem.

\begin{theorem}[\cite{bers1985inequality}]\label{theorem:BersConstant}
    For every $X \in \Teich(\Sigma)$, there exists a pants decomposition $P \in \mathcal{P}(\Sigma)$ whose curves all have length at most $B_{g,n}$ in the hyperbolic metric associated with $X$, where $B_{g,n}$ depends only on $g$ and $n$.
\end{theorem}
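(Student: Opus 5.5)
The plan is the classical greedy argument of Bers: build the pants decomposition one curve at a time, and at each step bound the length of the new curve in terms of the lengths of the curves already chosen and the area $2\pi(2g-2+n)$ of $X$. Fix $X$ with its hyperbolic metric and write $A=2\pi|\chi(\Sigma)|$. We construct geodesics $\gamma_1,\dots,\gamma_{\xi}$, with $\xi=3g-3+n$, pairwise disjoint, together with constants $L_1\le L_2\le\dots\le L_\xi$ depending only on $g,n$, such that $\ell_X(\gamma_j)\le L_j$. At each stage cut $X$ along $\gamma_1,\dots,\gamma_{j-1}$. This gives finitely many hyperbolic surfaces with geodesic boundary and cusps, each of area at most $A$. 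If some component is not a pair of pants, we choose $\gamma_j$ in it.

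\textbf{First step.} For $\gamma_1$ use an area argument at a point of maximal injectivity radius. If the injectivity radius at some point $p$ is $r$, an embedded disk of radius $r$ has area $2\pi(\cosh r-1)\le A$. Hence $r\le R_0:=\operatorname{arccosh}(1+A/2\pi)$, and there is a geodesic loop at $p$ of length $\le 2R_0$. One must ensure that this loop is essential and not homotopic into a cusp. To handle cusps, first remove the standard horoball neighbourhoods bounded by horocycles of length $2$, which are disjoint and have area $2$ each. Then choose $p$ outside them, so that the short loop is not peripheral. The geodesic representative of a loop that is homotopic to a simple curve or to its square can be taken simple by a standard surgery, which at most doubles the length. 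This gives $L_1$.

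\textbf{Inductive step.} Let $S$ be a component of the cut surface that is not a pair of pants. Its geodesic boundary curves have lengths $\le L_{j-1}$. Take a boundary curve $\delta$ and let its collar neighbourhood of width $t$ grow. Its area is $\ell(\delta)\sinh t$, and this is at most $A$ as long as the neighbourhood stays embedded. If $\ell(\delta)$ is bounded below, this bounds the width $t_0$ at which the neighbourhood first touches itself, another boundary curve, or a cusp horoball. If $\ell(\delta)$ is tiny, use instead the collar lemma together with the point-disk argument above, applied at a point on the outer edge of the standard collar. In either case we obtain an arc $a$ of length $\le 2t_0+O(1)$ from $\delta$ to itself or to another boundary component or cusp. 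The boundary of a regular neighbourhood of $\delta\cup a$ (together with the second boundary curve, if there is one) contains a simple closed curve. Its length is at most $2L_{j-1}+2(2t_0+O(1))$. This curve is essential and non-peripheral in $S$, because $S$ is not a pair of pants; this is the usual Euler-characteristic check on the three-holed sphere $\delta\cup a\cup\delta'$. Its geodesic representative is $\gamma_j$, and we set $L_j$ to be the resulting bound. This is an explicit function of $L_{j-1}$, $g$, $n$, for instance of the form $C(L_{j-1}+\log(1+A/L_{j-1}))$ with the small-length case handled by the collar constant.

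\textbf{Conclusion and main obstacle.} After $\xi$ steps every complementary component is a pair of pants, and we may take $B_{g,n}=L_\xi$. The main obstacle I expect is the uniformity of the inductive step when some boundary curve is very short, since $\sinh t$ then gives no bound from $\ell(\delta)\sinh t\le A$. I would first try to resolve this using the collar lemma. Specifically, start the growth from the outer boundary of the standard collar of $\delta$. That boundary has length bounded below by a universal constant. The arc constructed this way then crosses the collar, and the crossing costs only length in the surgered curve that stays parallel to $\delta$. Near $\delta$ itself the surgered curve can be chosen to run along $\delta$, so it pays only $\ell(\delta)\le L_{j-1}$ there.
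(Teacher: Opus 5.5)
The paper does not prove this statement; it only cites Bers. Your overall scheme is the standard Bers--Buser argument: choose $\gamma_j$ greedily in a non-pants component, grow a collar, and surger along the first contact arc. However, two steps fail as written.

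\textbf{The short-boundary case.} Your resolution does not work as stated. A surgered curve that ``runs along $\delta$'' must reach $\delta$ from outside the standard collar. That collar has width $w$ with $\sinh w\,\sinh(\ell(\delta)/2)=1$, so $w\approx\log(4/\ell(\delta))$. The curve therefore pays about $2w$, which is unbounded, not merely $\ell(\delta)$.

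The point-disk argument at a point of the outer collar boundary also yields nothing. There the shortest loop is the peripheral one, of length about $2$.

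Your recursion $C(L_{j-1}+\log(1+A/L_{j-1}))$ is unjustified as well. The width $t_0$ is governed by $\ell(\delta)$, which has no lower bound, and not by $L_{j-1}$.

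The fix is to never enter the collar:
\begin{enumerate}
\item Perform the surgery on the outer collar boundary $\delta^+$. It is isotopic to $\delta$ in $S$ and has length $\ell(\delta)\cosh w=\ell(\delta)\coth(\ell(\delta)/2)\le 2+\ell(\delta)$.
\item Grow the equidistant collar of $\delta$ past width $w$ until it first meets itself, another standard half-collar, or an area-$2$ cusp neighbourhood. These are pairwise disjoint by the collar lemma.
\item Since $\ell(\delta)\sinh(w+t)\ge\ell(\delta)\cosh w\,\sinh t\ge 2\sinh t$, the area bound gives $t_0\le\operatorname{arcsinh}(A/2)$.
\end{enumerate}
The new curve then has length at most $2L_{j-1}+4+2\operatorname{arcsinh}(A/2)$. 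This bound is uniform, with no case split on $\ell(\delta)$.

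\textbf{The first step when $n\ge 1$.} Choosing $p$ outside the horoball neighbourhoods bounded by horocycles of length $2$ does not force the short loop at $p$ to be non-peripheral. A point on such a horocycle has a peripheral geodesic loop of length $2\operatorname{arcsinh}(1)\approx 1.76<2R_0$, and this may be its shortest loop.

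Instead, start the induction from a cusp. The region bounded by a horocycle of length $h$ has area $h$. Growing the cusp neighbourhood from $h=2$ therefore gives a first contact at distance at most $\log(A/2)$, and the same surgery applies.

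For $n=0$ your point-disk argument is fine. The shortest geodesic loop at $p$ is already simple, so no length-doubling surgery is needed.

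Finally, in the self-contact case, only one of the two surgered curves need be essential. This uses the fact that the contact arc is a geodesic orthogonal to $\delta$ at both ends, hence not homotopic into $\delta$.
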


The constant $B_{g,n}$ is called the \textit{Bers' constant}.
From \cref{theorem:BersConstant}, we immediately have the following corollary.

\begin{corollary}\label{corollary:BersForMulticurves}
    Let $k$ be an integer with $1 \le k \le 3g-3+n$.
    Then, for every $X \in \Teich(\Sigma)$, there exists a $k$-multicurve $\alpha = \{\alpha^1,\ldots,\alpha^k\} \in \multicurve{k}(\Sigma)$ such that $\ell_X(\alpha^i) \le B_{g,n}$ for each $\alpha^i \in \alpha$.
\end{corollary}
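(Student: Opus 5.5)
The statement follows directly from \cref{theorem:BersConstant}, so I will argue by restriction.

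Fix $X \in \Teich(\Sigma)$. By \cref{theorem:BersConstant} there is a pants decomposition
\[
P=\{\gamma^1,\ldots,\gamma^{3g-3+n}\} \in \mathcal{P}(\Sigma)
\]
all of whose curves satisfy $\ell_X(\gamma^j)\le B_{g,n}$. Given $k$ with $1\le k\le 3g-3+n$, I take
\[
\alpha=\{\gamma^1,\ldots,\gamma^k\}.
\]
The curves of a pants decomposition are pairwise disjoint, pairwise non-isotopic and essential, so $\alpha$ is a set of exactly $k$ distinct isotopy classes of pairwise disjoint essential simple closed curves. Hence $\alpha\in\multicurve{k}(\Sigma)$.

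Each component of $\alpha$ is one of the $\gamma^j$, so $\ell_X(\alpha^i)\le B_{g,n}$ for every $\alpha^i \in \alpha$. This is the required bound.

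No step is a real obstacle. The only point to check is that a subset of a pants decomposition is a $k$-multicurve in the sense of the definition: its cardinality is exactly $k$ because the isotopy classes in a pants decomposition are distinct.
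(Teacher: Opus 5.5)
Your proof is correct and follows the same route as the paper. The paper states this corollary as an immediate consequence of Bers' theorem (\cref{theorem:BersConstant}): take a pants decomposition of curves of length at most $B_{g,n}$ and keep any $k$ of its curves, exactly as you do, and your check that these $k$ isotopy classes are distinct is the only point needing verification.
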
 

As for a relation between extremal length and hyperbolic length, Maskit's inequality is well known.

\begin{theorem}[\cite{maskit1985comparison}]\label{theorem:Maskit'sLemma}
    For any $X \in \Teich(\Sigma)$ and any $\gamma \in \mathcal{C}(\Sigma)$, the    following inequality holds:
    \begin{equation*}
        \Ext_X(\gamma) \le \ell_X(\gamma)e^{\ell_X(\gamma)/2}.
    \end{equation*}
\end{theorem}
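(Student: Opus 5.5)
We need to prove the last statement: Maskit's inequality, $\Ext_X(\gamma)\le \ell_X(\gamma)e^{\ell_X(\gamma)/2}$. It is a cited theorem, but we should still sketch a proof.

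The plan is to bound the extremal length from above by the modulus of an embedded annulus, using the standard collar around the geodesic. Recall that $\Ext_X(\gamma)=1/\sup\Mod(A)$, where the supremum runs over annuli $A\subset X$ with core homotopic to $\gamma$. Equivalently, for any such annulus $A$ we have $\Ext_X(\gamma)\le 1/\Mod(A)$. This follows from the definition in the paper: for a conformal metric $\rho$, restrict $\rho$ to $A$ and use the length–area argument on the round annulus model. So it suffices to exhibit an embedded annulus with $\Mod(A)\ge e^{-\ell/2}/\ell$, where $\ell=\ell_X(\gamma)$.

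\textbf{Step 1: the collar.} By the collar lemma, the geodesic $\gamma$ has an embedded collar of width $w$ on each side, where $\sinh(w)\sinh(\ell/2)=1$. We model the annular cover as $\mathbb{H}/\langle z\mapsto e^{\ell}z\rangle$. In polar coordinates $z=re^{i\theta}$, the collar corresponds to $|\theta-\pi/2|<\theta_0$, where $\theta_0$ is determined by $w$. Indeed, the hyperbolic distance from the imaginary axis to the ray of angle $\theta$ (measured from the axis) satisfies $\cosh d=1/\cos\theta$, so $\tan\theta_0=\sinh w=1/\sinh(\ell/2)$.

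\textbf{Step 2: compute the modulus.} Apply $\log z=\log r+i\theta$. The collar becomes a Euclidean rectangle of height $2\theta_0$ and periodic width $\ell$ in the $\log r$ direction. Hence
\[
\Mod(A)=\frac{2\theta_0}{\ell}.
\]

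\textbf{Step 3: elementary estimate.} We need $2\theta_0\ge e^{-\ell/2}$, where $\theta_0=\arctan(1/\sinh(\ell/2))$. This is the step requiring care. One route is to use $\arctan x\ge x/(1+x^2)$ or a similar bound and compare with $e^{-\ell/2}$ for all $\ell>0$. A cleaner route: $\theta_0=\pi/2-\arctan\sinh(\ell/2)$ is the Gudermannian complement, which gives
\[
\tan\!\left(\tfrac{\theta_0}{2}\right)=e^{-\ell/2}.
\]
Then, since $\tan(\theta_0/2)\le 2\tan(\theta_0/2)$ trivially and $\theta_0\ge\tan(\theta_0/2)$ on $(0,\pi/2]$ by concavity of $\tan$ there, we get $2\theta_0\ge e^{-\ell/2}$. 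In fact the sharper bound $\theta_0\ge e^{-\ell/2}$ holds.

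\textbf{Conclusion.} Combining the steps,
\[
\Ext_X(\gamma)\le\frac{\ell}{2\theta_0}\le \ell\, e^{\ell/2},
\]
as claimed. The main obstacle is verifying the collar width identity and the inequality in Step 3, specifically that $\theta_0\ge\tan(\theta_0/2)$ on $(0,\pi/2]$. This holds because $\tan x\le 2x$ for $x\in(0,\pi/4]$.
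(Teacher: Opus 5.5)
The paper gives no proof of this statement (it is quoted from Maskit). Your argument is correct, and it is the standard proof, essentially Maskit's own: bound $\Ext_X(\gamma)$ by $1/\mathrm{Mod}$ of the embedded standard collar, and compute that modulus in the annular cover via $z\mapsto\log z$.

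There is one slip in Step 3: $\tan$ is \emph{convex}, not concave, on $(0,\pi/2)$. The inequality you actually need, $\tan x\le 2x$ on $(0,\pi/4]$, follows either from that convexity (the chord gives $\tan x\le 4x/\pi$) or from $\tan x\le x/\cos x\le\sqrt{2}\,x$. Used this way, your computation even gives $\Ext_X(\gamma)\le \ell/(4\arctan e^{-\ell/2})\le (\ell/\pi)e^{\ell/2}$, which is stronger than the stated bound.
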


From \cref{corollary:BersForMulticurves,theorem:Maskit'sLemma}, we obtain the following corollary.

\begin{corollary}\label{corollary:UniformBoundForExtremalLength}
    Let $k$ be an integer with $1\leq k\leq 3g-3+n$.
    For every $X\in \Teich(\Sigma)$, there exists a $k$-multicurve $\alpha\in\multicurve{k}(\Sigma)$ with $\Ext_X(\alpha)\leq \delta_0$,
    where $\delta_0$ is a constant depending only on $g$ and $n$.
\end{corollary}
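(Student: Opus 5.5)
The statement to prove is \cref{corollary:UniformBoundForExtremalLength}, and it follows directly by chaining \cref{corollary:BersForMulticurves} with \cref{theorem:Maskit'sLemma}. The only step needing care is what $\Ext_X(\alpha)$ means for a multicurve rather than a single curve.

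Fix $X\in\Teich(\Sigma)$. By \cref{corollary:BersForMulticurves}, obtained from \cref{theorem:BersConstant} by keeping any $k$ curves of a Bers pants decomposition, there is a $k$-multicurve $\alpha=\{\alpha^1,\dots,\alpha^k\}$ with $\ell_X(\alpha^i)\le B_{g,n}$ for every $i$. The function $t\mapsto te^{t/2}$ is increasing on $[0,\infty)$. Applying \cref{theorem:Maskit'sLemma} to each component therefore gives
\begin{equation*}
\Ext_X(\alpha^i)\le \ell_X(\alpha^i)e^{\ell_X(\alpha^i)/2}\le B_{g,n}e^{B_{g,n}/2}
\end{equation*}
for each $i$.

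If $\Ext_X(\alpha)\le\delta_0$ is read componentwise, as in the definition of $\Thin_\alpha$, we are done with $\delta_0=B_{g,n}e^{B_{g,n}/2}$.

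If instead $\Ext_X(\alpha)$ is read as the extremal length of the whole multicurve, as in the displayed definition, we bound it by the components. For any conformal metric $\rho$, we have $\ell_\rho(\alpha)=\sum_i\ell_\rho(\alpha^i)$. Cauchy--Schwarz then gives
\begin{equation*}
\frac{\ell_\rho(\alpha)^2}{\Area(\rho)}\le k\sum_i\frac{\ell_\rho(\alpha^i)^2}{\Area(\rho)}\le k\sum_i\Ext_X(\alpha^i).
\end{equation*}
Taking the supremum over $\rho$ yields $\Ext_X(\alpha)\le k^2B_{g,n}e^{B_{g,n}/2}$. Since $k\le 3g-3+n$, we may take
\begin{equation*}
\delta_0=(3g-3+n)^2B_{g,n}e^{B_{g,n}/2}.
\end{equation*}
This is at least the componentwise constant, so it covers both readings and depends only on $g$ and $n$.

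I expect no real obstacle. The one point needing attention is this second reading, which the Cauchy--Schwarz estimate handles, so that $\delta_0$ stays independent of $X$.
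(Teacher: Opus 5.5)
Your proof is correct and follows the paper's argument: you take a Bers $k$-multicurve from \cref{corollary:BersForMulticurves}, split $\ell_\rho(\alpha)^2$ over the components by Cauchy--Schwarz, and apply Maskit's inequality to each component, which gives $\delta_0=(3g-3+n)^2B_{g,n}e^{B_{g,n}/2}$. The only cosmetic difference is that the paper first fixes a metric $\rho$ within $1$ of the supremum, whereas you bound $\ell_\rho(\alpha)^2/\Area(\rho)$ uniformly in $\rho$ and then take the supremum directly, which is slightly cleaner and avoids the additive slack.
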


\begin{proof}
    For every $X\in\Teich(\Sigma)$, using \cref{corollary:BersForMulticurves}, we take a $k$-multicurve $\alpha=\{\alpha^1,\ldots,\alpha^k\} \in \multicurve{k}(\Sigma)$ with $\ell_X(\alpha^i)\leq B_{g,n}$ for each $\alpha^i\in\alpha$.
    By definition of extremal length, we can take a conformal metric $\rho$ with 
    \begin{equation*}
        \Ext_X(\alpha)\leq \frac{\ell_\rho(\alpha)^2}{\Area(\rho)}+1.
    \end{equation*}
    For this $\rho$, we have 
    \begin{align*}
        \frac{\ell_\rho(\alpha)^2}{\Area(\rho)}&=\frac{\left(\sum_{i=1}^k\ell_\rho(\alpha^i)\right)^2}{\Area(\rho)} & \\
        &\leq k \sum_{i=1}^k\frac{\ell_\rho(\alpha^i)^2}{\Area(\rho)} & (\text{Cauchy-Schwartz})\\
        &\leq k\sum_{i=1}^k \Ext_X(\alpha^i) \\
        &\leq k \sum_{i=1}^k \ell_X(\alpha^i)e^{\ell_X(\alpha^i)/2} & (\text{\Cref{theorem:Maskit'sLemma}})\\
        &\leq (3g-3+n)^2B_{g,n}e^{B_{g,n}/2}.
    \end{align*}
\end{proof}

We will use the following inequality later.

\begin{lemma}[Minsky's inequality \cite{minsky1993ends}]\label{lemma:MinskyInequality}
    Let $\gamma_1,\gamma_2\in\mathcal{MF}(\Sigma)$ be measured foliations on $\Sigma$, and $X\in \Teich(\Sigma)$. Then,
    \begin{equation*}
        i(\gamma_1,\gamma_2)^2\leq \Ext_X(\gamma_1)\Ext_X(\gamma_2).
    \end{equation*}
\end{lemma}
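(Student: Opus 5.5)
This is Minsky's inequality \cite{minsky1993ends}: for measured foliations $\gamma_1,\gamma_2$ and $X\in\Teich(\Sigma)$ we want $i(\gamma_1,\gamma_2)^2\leq \Ext_X(\gamma_1)\Ext_X(\gamma_2)$. The plan is to realize $\gamma_1$ by a holomorphic quadratic differential on $X$ and to test the extremal length of $\gamma_2$ against the flat metric of that differential. The first step is a reduction. Both sides are homogeneous of degree one in each $\gamma_j$ under scaling of the transverse measure. Both sides are also continuous on $\mathcal{MF}(\Sigma)$: the intersection number is continuous, and extremal length extends continuously, as recalled above. Weighted simple closed curves are dense in $\mathcal{MF}(\Sigma)$. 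So it suffices to prove the inequality when $\gamma_2=\gamma$ is a single simple closed curve and $\gamma_1=\mathcal{F}$ is an arbitrary measured foliation; we may assume $i(\mathcal F,\gamma)>0$.

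The second step realizes $\mathcal F$ on $X$. By the Hubbard--Masur theorem \cite{hubbard1979quadratic}, there is a unique holomorphic quadratic differential $q$ on $X$ with $\mathcal{F}_h(q)=\mathcal F$ (equivalently one may use the vertical foliation). The key fact to use here is $\Ext_X(\mathcal F)=\|q\|=\int_X|q|$. This is the standard identity from Hubbard--Masur and Kerckhoff. One way to see it is that the horizontal trajectories decompose $X$ into cylinders and minimal components, and $|q|$ is the extremal metric for $\mathcal F$. If I only want the inequality, the direction that is actually needed is $\|q\|\le \Ext_X(\mathcal F)$. This follows, for example, from the variational characterization of $q$ as the minimizer of the Dirichlet energy of equivariant maps to the dual $\mathbb{R}$-tree (Gardiner), or directly from Kerckhoff's treatment.

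The third step is the length estimate in the flat metric $\rho=|q|^{1/2}|dz|$. Any closed curve homotopic to $\gamma$ satisfies $\ell_\rho(\gamma)\ge\int|d\eta| \ge i(\mathcal F,\gamma)$, because $|d\eta|$ measures the transverse measure of $\mathcal F$ along the curve and $|\mathrm{Im}\sqrt q|\le|\sqrt q|$. Hence $\ell_\rho(\gamma)\ge i(\mathcal F,\gamma)$, and $\Area(\rho)=\|q\|$. The definition of extremal length as a supremum then gives
\begin{equation*}
\Ext_X(\gamma)\ge \frac{\ell_\rho(\gamma)^2}{\Area(\rho)}\ge\frac{i(\mathcal F,\gamma)^2}{\|q\|}=\frac{i(\mathcal F,\gamma)^2}{\Ext_X(\mathcal F)}.
\end{equation*}
Rearranging yields the claim for curves, and density extends it to all of $\mathcal{MF}(\Sigma)$.

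The main obstacle is the identity $\Ext_X(\mathcal F)=\|q\|$ for a general, non-multicurve foliation. More precisely, it is the inequality $\|q\|\le\Ext_X(\mathcal F)$, which is where Hubbard--Masur theory enters. I would cite it from \cite{kerckhoff1980asymptotic}. For the only use in this paper, where $\gamma_1$ and $\gamma_2$ are multicurves, I could instead apply the Jenkins--Strebel version, in which $q$ has closed-cylinder trajectories and $\Ext$ is computed explicitly from the cylinder moduli.
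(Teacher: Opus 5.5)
The paper gives no proof of this lemma; it only cites Minsky. Your argument is correct and is essentially Minsky's original one: realize $\gamma_1$ by its Hubbard--Masur differential $q$ on $X$, use $\Ext_X(\gamma_1)=\|q\|$, and test $\Ext_X(\gamma_2)$ against the admissible metric $|q|^{1/2}|dz|$, in which every representative of $\gamma_2$ has length at least $\int|d\eta|\ge i(\gamma_1,\gamma_2)$, with continuity and density of weighted curves handling general foliations. Two minor remarks: both sides of the displayed inequality scale quadratically in each $\gamma_j$ (degree one only after taking square roots), and since you already use density you may reduce both arguments to weighted simple closed curves, so that beyond the continuity of $\Ext_X$ on $\mathcal{MF}(\Sigma)$ (which the paper takes as known) the only input is the Jenkins--Strebel identity $\Ext_X(\gamma)=\|q_\gamma\|$ for a single curve, which removes what you identified as the main obstacle.
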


\section{Inequality between intersection numbers and distance}

\subsection{Inequality for pants graph distance}

Hempel \cite{hempel2001three-manifolds} showed that the distance $d_{\mathcal{C}}(\gamma_1,\gamma_2)$ between any two curves $\gamma_1,\gamma_2$ in the curve graph is bounded above by a logarithmic function of their geometric intersection number $i(\gamma_1,\gamma_2)$.
This result is often referred to as Hempel's lemma.
To prove Hempel's lemma, he used a surgery argument for curves.
However, this surgery argument does not apply to the analogous problem in the $k$-multicurve graph, in particular, in the pants graph.
On the other hand, Lackenby and Yazdi \cite{lackenby2024bounds} established an inequality showing that the distance $d_{\mathcal{P}}(P_1,P_2)$ between two pants decompositions $P_1,P_2 \in \mathcal{P}(\Sigma)$ is bounded above by a logarithmic function of the intersection number $i(P_1,P_2)$.

\begin{theorem}[{\cite[Theorem 1.1]{lackenby2024bounds}}]\label{theorem:HempelLemmaForPantsGraph}
    Let $P_1,P_2\in \mathcal{P}(\Sigma)$ be pants decompositions on $\Sigma$.
    Then there exists a constant $C>0$ such that 
    \begin{equation*}
        d_{\mathcal{P}}(P_1,P_2)\leq C (1+\log i(P_1,P_2)), 
    \end{equation*}
    where $C$ depends only on $\chi(\Sigma)$.
    Moreover $C=O(|\chi(\Sigma)|^2) $ as $|\chi(\Sigma)|\to \infty$.
\end{theorem}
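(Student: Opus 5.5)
We plan to compare $P_1$ with $P_2$ by means of a combinatorial model carried by $P_1$: the intersection pattern of $P_2$ with $P_1$ will be encoded by integer weights, and we will show that a bounded number of pants moves always makes this weight decrease geometrically. Concretely, we fix $P_1$ and a standard Dehn--Thurston train track $\tau$ adapted to $P_1$. Each pair of pants of $P_1$ carries a bounded number of branches joining its boundary components, and twisting is recorded along the annuli around the curves of $P_1$. We put $P_2$ in minimal position so that it is carried by $\tau$ with integer weights $w$. The total weight $\|w\|$ on the branches crossing $P_1$ is comparable to $i(P_1,P_2)$, up to a factor depending only on $\chi(\Sigma)$. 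The number of branches is $O(|\chi(\Sigma)|)$.

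The key step is a \emph{halving lemma}. We aim to show that if $i(P_1,P_2)$ exceeds a constant depending on $\chi(\Sigma)$, then there is a pants decomposition $P_1'$ with
\[
d_{\mathcal{P}}(P_1,P_1')\le C_0|\chi(\Sigma)|
\quad\text{and}\quad
i(P_1',P_2)\le \tfrac12\, i(P_1,P_2).
\]
Iterating this $O(\log i(P_1,P_2))$ times, and then finishing with a bounded number of moves once the intersection is bounded, gives the stated estimate. The bounded final stage holds because only finitely many configurations occur up to the action of $\Mod(\Sigma)$. Combining the two stages yields $C=O(|\chi(\Sigma)|^2)$, since each halving step costs $O(|\chi(\Sigma)|)$ moves and the constants from the carrying comparison contribute a further factor of order $|\chi(\Sigma)|$.

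To obtain the halving lemma, we will split into two cases according to where the weight of $P_2$ concentrates.

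\emph{Twisting case.} Most of the weight is twisting around a single curve $\alpha$ of $P_1$. Twisting is cheap in the pants graph: inside the complexity-one subsurface containing $\alpha$, the pants graph is the Farey graph. There, one elementary move replaces $\alpha$ by a curve whose twisting parameter relative to $P_2$ is essentially the continued-fraction remainder, which at least halves the relevant weight.

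\emph{Complexity case.} The weight is spread across pants. We then locate a branch whose weight is at least $\|w\|/N$, with $N=O(|\chi(\Sigma)|)$. We perform a splitting of $\tau$ there and realize the split track, up to boundedly many elementary moves, as a Dehn--Thurston track for a new pants decomposition. A bounded number of such splittings should reduce $\|w\|$ by a definite fraction.

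The main obstacle is precisely this complexity case: controlling how many pants moves are needed to transport the Dehn--Thurston structure through a splitting, uniformly in the weights. We will first try the following. Cutting $\Sigma$ along a curve of $P_1$ and arguing by induction on complexity, we aim to show that the large-weight branch lies in a complexity-one subsurface, where the Farey-graph argument already applies. Should this fail, we would instead carry out the argument with normal curves with respect to a triangulation whose edges are built from $P_1$. In that setting, edge flips correspond to boundedly many pants moves, and a flip on a maximal-weight edge reduces the normal coordinates multiplicatively.
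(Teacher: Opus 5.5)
The paper does not prove this statement. It is quoted from Lackenby--Yazdi, and the paper only uses (and re-derives in the appendix) the quadratic bound $d_{\mathcal{P}}(P,P')\le 6\,i(P,P')^2$, obtained from pre-triangulations and reorderings of their edges. So your outline has to stand on its own, and it does not: the halving lemma \emph{is} the theorem, and its hard case is left open. In the complexity case you assert that boundedly many splittings cut $\|w\|$ by a definite fraction. But a single splitting can lower the total branch weight by an amount that is tiny compared with $\|w\|$. Long runs of such slow splittings arise when $P_2$ winds many times around a curve $\beta$ carried by the split track that is \emph{not} a curve of the current pants decomposition. Such a run must be detected and absorbed in one go, e.g.\ by moving to a pants decomposition containing $\beta$ (on which $T_\beta$ acts trivially) and back, at a cost bounded in terms of the weight of $\beta$. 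One then needs an amortized, Euclidean-algorithm-type count (in the spirit of Agol--Hass--Thurston orbit counting) showing that the remaining splittings shrink the weight geometrically. Nothing in your outline supplies this, nor is it clear that a split track is, up to boundedly many moves, a Dehn--Thurston track of a nearby pants decomposition.

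Neither fallback closes the gap. Every branch of a Dehn--Thurston track already lies in some complexity-one piece $Y_\alpha$, so the proposed induction is vacuous. The real obstruction is that a move changes only one curve: when the intersection is spread over many curves of $P_1$, no single Farey move removes more than about a $1/\xi(\Sigma)$ fraction of $i(P_1,P_2)$. You would need a coordinated sequence of moves, which you do not construct. The claim that flipping a maximal-weight edge reduces normal coordinates multiplicatively is false. For $\gamma=T_\beta^N(\gamma_0)$ the normal weights are $O(N)$. Yet the flip graph is quasi-isometric to a mapping class group, in which Dehn twists are undistorted, so $\gtrsim N$ flips are needed before $\gamma$ has bounded weight. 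Flip counting therefore gives linear, not logarithmic, bounds unless it exploits that twisting is free in $\mathcal{P}(\Sigma)$.

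Two further problems. First, your twisting case misidentifies what is being halved. For $\alpha\in P_1$ one has $T_\alpha(P_1)=P_1$, hence $i(P_1,T_\alpha^mP_2)=i(P_1,P_2)$ and $d_{\mathcal{P}}(P_1,T_\alpha^mP_2)=d_{\mathcal{P}}(P_1,P_2)$. Twisting about curves of $P_1$ contributes nothing to $i(P_1,P_2)$ and costs nothing, so it should simply be normalized away. That normalization is also what makes all weights of $\tau$, including twist branches, $O(i(P_1,P_2))$. Your Farey move is really a Euclidean step lowering $i(\alpha,P_2)$, and it halves this only under a single-slope hypothesis on $P_2\cap Y_\alpha$ that you would still have to justify.

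Second, the constant. Finiteness up to $\Mod(\Sigma)$ in the final stage gives no control on how the constant depends on $\chi(\Sigma)$, so it cannot yield $C=O(|\chi(\Sigma)|^2)$. You would need an explicit estimate there, e.g.\ the quadratic bound above with a threshold of size $O(|\chi(\Sigma)|)$. Also, a comparison constant $c(\chi)$ between $\|w\|$ and $i(P_1,P_2)$ adds only $O(\log c(\chi))$ halving steps, not a factor $|\chi(\Sigma)|$. So your accounting for $O(|\chi(\Sigma)|^2)$ does not follow from the steps you list.
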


Furthermore, as an intermediate step in the proof of \cref{theorem:HempelLemmaForPantsGraph}, Lackenby and Yazdi derived the following estimate, whose upper bound is a quadratic function of the intersection number, but which is more explicit and simpler than the bound in \Cref{theorem:HempelLemmaForPantsGraph}.

\begin{theorem}[{\cite[Corollary 3.16]{lackenby2024bounds}}]\label{theorem:QuadraticBound}
Let $P_1, P_2$ be pants decompositions of $\Sigma$.
Then
\begin{equation}\label{eq:QuadraticEstimate}
d_{\mathcal{P}}(P_1,P_2) \leq 6 i(P_1,P_2)^2.
\end{equation}
\end{theorem}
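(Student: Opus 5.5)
When $i(P_1,P_2)=0$ the two pants decompositions are disjoint. Since both are maximal multicurves, this forces $P_1=P_2$, so \eqref{eq:QuadraticEstimate} holds trivially. I would therefore prove the theorem by induction on $N=i(P_1,P_2)\ge 1$. The goal of the inductive step is: \emph{starting from $P_1$, there is a path in $\mathcal{P}(\Sigma)$ of length at most $6(2N-1)$ ending at a pants decomposition $P_1'$ with $i(P_1',P_2)\le N-1$.} Granting this step and using $6(N-1)^2+6(2N-1)=6N^2$, we get
\begin{equation*}
d_{\mathcal{P}}(P_1,P_2)\le 6(2N-1)+6(N-1)^2=6N^2 .
\end{equation*}
Any per-unit cost linear in $N$ with leading coefficient $12$ closes the induction in the same way. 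The reduction itself can always be arranged to lower $i$ by at least one.

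To construct the reduction, I would fix a curve $\gamma\in P_1$ with $i(\gamma,P_2)>0$. Let $S$ be the complexity-one component of $\Sigma\setminus(P_1\setminus\gamma)$, which is a once-holed torus or a four-holed sphere containing $\gamma$. The curves of $P_2$ meet $S$ in a finite collection of essential arcs and curves, of total intersection with $\partial S\cup\gamma$ bounded by $N$. In $S$ the curves form the Farey graph, and elementary moves are Farey edges. So the problem reduces to a question inside the Farey graph: move from $\gamma$ to a slope $\gamma'$ in $S$ that meets the arc system $P_2\cap S$ strictly fewer times than $\gamma$ does, using a number of Farey edges linear in $i(\gamma,P_2)$.

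In the Farey graph, the weight function $\delta\mapsto i(\delta,P_2\cap S)$ is essentially a sum of distances to the finitely many slopes determined by the arc types. Along the Farey path from $\gamma$ toward a minimizing slope, this function is controlled by the continued-fraction expansion. The number of Farey steps needed to achieve a first strict decrease should be bounded by the twisting of $\gamma$ relative to the arcs, and that twisting is at most the number of arc crossings. This is where a bound of the form $c\cdot N$ with an explicit small $c$ comes from. If $P_2\cap S$ consists only of arcs parallel to $\partial S$ (waves), I would instead perform a surgery of $\gamma$ along an outermost wave. This gives a curve $\gamma'$ disjoint from $\gamma$, hence one move away after adjusting the other boundary curves, and it lowers the intersection.

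The main obstacle is making the count of elementary moves per unit decrease of intersection \emph{linear} in $N$, with constant $12$, uniformly across both complexity-one types. The danger is twisting: a curve may wrap many times around $\gamma$, so that untwisting requires many moves. My first attempt would be to bound the number of Dehn-twist-like Farey steps by the number of arcs of $P_2$ crossing the annulus around $\gamma$, which is at most $N$. I would then account separately for at most a bounded number of extra moves per pair of pants, since $6$ is the natural per-arc constant.
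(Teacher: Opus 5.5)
\textbf{There is a genuine gap: the inductive step fails.} All the content of your argument is in that step. The mechanism you propose for it does not work. It is not true that some $\gamma\in P_1$ can always be moved by Farey moves inside its complexity-one piece $S$ to a curve meeting $P_2$ fewer times. On curves $\delta\subset S$, the function $\delta\mapsto i(\delta,P_2)$ is a weighted sum of $|\det(s,\delta)|$ over the slopes $s$ of the arcs of $P_2\cap S$. So $\gamma$ is already a global minimiser as soon as the arcs disjoint from $\gamma$ carry enough weight, and this can happen for every curve of $P_1$ simultaneously.

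\emph{A concrete example.} Double a pair of pants to get a closed genus-$2$ surface. Let $P_1=\{a_1,a_2,a_3\}$ be the doubled boundary curves and $P_2=\{b_1,b_2,b_3\}$ the doubled seams. Then $b_k$ meets $a_i$ and $a_j$ once each and is disjoint from $a_k$, so $i(P_1,P_2)=6$.
\begin{itemize}
\item In the four-holed sphere $S_{a_1}=\Sigma\setminus(a_2\cup a_3)$, the curve $b_1$ contributes two arcs disjoint from $a_1$.
\item The curves $b_3$ and $b_2$ contribute two disjoint arcs, each crossing $a_1$ once. One joins the two copies of $a_2$ and the other the two copies of $a_3$, so they have a common slope $u$ adjacent to $a_1$.
\item In coordinates with $a_1=(1,0)$ and $u=(0,1)$, every curve $\delta=(p,q)\subset S_{a_1}$ therefore satisfies $i(\delta,P_2)=2|p|+2|q|\ge 2=i(a_1,P_2)$.
\item By the symmetry of the configuration, the same holds for $a_2$ and $a_3$.
\end{itemize}
So no Farey path of a single curve lowers the intersection. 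One must first make moves that do not decrease it; for example $a_1\to u$, after which $a_2$ can be moved to $b_3$. Your proposal has no mechanism for bounding the number of such non-decreasing moves. This is exactly the failure of Hempel-type surgery in the pants graph that the paper points out.

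Two further points are unresolved. Your wave/surgery case does not help: a curve of $S$ disjoint from $\gamma$ is $\gamma$ itself or peripheral, and ``adjusting the other boundary curves'' is neither described nor counted. Even when a decreasing slope does exist, the linear bound with constant $12$ is only conjectured, not proved.

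\textbf{How the paper avoids this.} Following Lackenby--Yazdi, the paper never tries to decrease intersection number.
\begin{itemize}
\item After cutting along common curves, it superimposes $P_1$ and $P_2$ in minimal position into a pre-triangulation $\mathcal{T}$ with at most $2N$ edges.
\item It notes that $P_1=\mathbf{P}(\mathcal{T},\mathrm{O})$ and $P_2=\mathbf{P}(\mathcal{T},\mathrm{O}')$ for suitable orderings of the edges.
\item By a case analysis of how two consecutive edges attach to $N_{j-1}$, it shows that an adjacent transposition of the ordering changes $\mathbf{P}(\mathcal{T},\cdot)$ by at most $3$ in $\mathcal{P}(\Sigma)$.
\item Since $\mathrm{O}'$ is reached from $\mathrm{O}$ by at most $\binom{2N}{2}$ adjacent transpositions, $d_{\mathcal{P}}(P_1,P_2)\le 3N(2N-1)\le 6N^2$.
\end{itemize}
The quadratic comes from counting transpositions, not from an induction on $N$. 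The intermediate pants decompositions need not meet $P_2$ less, which is what lets the argument sidestep the local minima above.
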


\begin{remark}
The right-hand side of \cref{eq:QuadraticEstimate} appears implicitly in the arguments in Section~3 of \cite{lackenby2024bounds}.
In \Cref{section:Appendix}, we discuss the explicit upper bound.
\end{remark}

\subsection{Inequality of $k$-multicurve graph distance}

\begin{lemma}\label{lemma:Extension}
    Let $k$ and $l$ be integers with $1 \le k,l \le 3g-4+n$.
    For any $k$-multicurve $\alpha \in \multicurve{k}(\Sigma)$ and any $l$-multicurve $\beta \in \multicurve{l}(\Sigma)$, there exists a $(k+1)$-multicurve $\tilde{\alpha} \in \multicurve{k+1}(\Sigma)$ such that $\alpha \subset \tilde{\alpha}$ and
    \begin{equation*}
        i(\tilde{\alpha}, \beta) \le 2\, i(\alpha, \beta).
    \end{equation*}
\end{lemma}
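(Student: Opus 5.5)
The plan is to add to $\alpha$ a single curve $\gamma$ that lies in a non-pants complementary component of $\alpha$ and satisfies $i(\gamma,\beta)\le i(\alpha,\beta)$. Then $\tilde\alpha=\alpha\cup\{\gamma\}$ works, since
\begin{equation*}
i(\tilde\alpha,\beta)=i(\alpha,\beta)+i(\gamma,\beta)\le 2\,i(\alpha,\beta).
\end{equation*}
Because $k\le 3g-4+n<\xi(\Sigma)$, the multicurve $\alpha$ is not a pants decomposition. Hence some component $S$ of $\Sigma\setminus\alpha$ is not a pair of pants, so $S$ contains an essential curve that is non-peripheral in $S$. Any such curve is essential in $\Sigma$ and not isotopic to a component of $\alpha$. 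We put $\alpha$ and $\beta$ in minimal position; the bound on $l$ plays no role in the argument.

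\textbf{Case 1: $\beta\cap S$ contains no arc.} Then every component of $\beta$ meeting $S$ is either a curve inside $S$ or disjoint from $S$ up to isotopy. We take $\gamma$ to be either a component of $\beta$ that is essential and non-peripheral in $S$, or, if there is none, any essential non-peripheral curve of $S$. In both cases $i(\gamma,\beta)=0$.

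\textbf{Case 2: $\beta\cap S$ contains an essential arc $a$.} Its endpoints lie on boundary curves of $S$, which are components of $\alpha$ (a single curve of $\alpha$ may appear twice on $\partial S$). We construct $\gamma$ by the standard arc-and-boundary surgery.

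\emph{Endpoints on distinct boundary components $c_1,c_2$.} Let $\gamma$ be the boundary component of a regular neighbourhood $N(c_1\cup a\cup c_2)$ in $S$ that is not parallel to $c_1$ or $c_2$. This $\gamma$ consists of two parallel copies of $a$ together with arcs running along $c_1$ and $c_2$. The copies of $a$ can be pushed off $\beta$, because $\beta$ is embedded and $a\subset\beta$. Therefore $\gamma$ meets $\beta$ only near $c_1\cup c_2$, giving $i(\gamma,\beta)\le i(c_1,\beta)+i(c_2,\beta)\le i(\alpha,\beta)$.

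\emph{Endpoints on the same boundary component $c$.} $N(c\cup a)$ is a pair of pants whose two new boundary curves $\gamma_1,\gamma_2$ each consist of one copy of $a$ and a subarc of $c$. Together these subarcs cover $c$, so $i(\gamma_1,\beta)+i(\gamma_2,\beta)\le i(c,\beta)\le i(\alpha,\beta)$. In particular each $\gamma_j$ satisfies the required bound on its own.

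The main obstacle is checking that the chosen curve is essential and non-peripheral in $S$, so that it is a genuinely new curve.

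\emph{For distinct endpoints:} if $\gamma$ bounded a disk or a once-punctured disk in $S$, or were parallel to a third boundary component, then $S$ would be a pair of pants, contrary to the choice of $S$.

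\emph{For the same endpoint:} if both $\gamma_1$ and $\gamma_2$ were inessential or peripheral, then $S$ would be a pair of pants, again a contradiction. In addition, $\gamma_1$ or $\gamma_2$ bounding a disk would contradict the essentiality of $a$, and minimal position rules out bigons. So at least one $\gamma_j$ is essential and non-peripheral, and we take $\gamma=\gamma_j$.

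Because intersection number is a minimum over representatives, the bounds for these particular representatives give $i(\gamma,\beta)\le i(\alpha,\beta)$, and we conclude using the displayed inequality above.
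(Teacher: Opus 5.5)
\textbf{Gap in the distinct-endpoints subcase.} Your strategy is the paper's own: its cases (i)--(ii) correspond to your Case~1, and its case (iii) to your arc surgery. The gap sits exactly at the possibility you noted in parentheses.

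Suppose $c_1$ and $c_2$ are the two sides of a single non-separating curve $\alpha^j$ of $\alpha$. Then $i(c_1,\beta)+i(c_2,\beta)=2\,i(\alpha^j,\beta)$, which is not bounded by $i(\alpha,\beta)$ in general (e.g.\ if $\alpha=\{\alpha^j\}$). Your $\gamma$ is then $\partial N(\alpha^j\cup a)$, the boundary of a one-holed torus. Your representative meets $\beta$ in $2\,i(\alpha^j,\beta)-2$ points, and these can all be essential.

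Here is an example.
\begin{itemize}
\item Write a closed genus-$2$ surface as $T'\cup_{\gamma'}T''$ with $T',T''$ one-holed tori, and let $\alpha=\{\alpha^1\}$ with $\alpha^1\subset T'$ of slope $0$.
\item Let $\sigma_1,\sigma_2\subset T'$ be disjoint essential arcs parallel to the curves of slopes $2$ and $\infty$. So $\sigma_1$ meets $\alpha^1$ twice with the same sign, and $\sigma_2$ meets it once.
\item Close them up with two disjoint non-parallel essential arcs of $T''$ to obtain a $2$-multicurve $\beta$.
\item No arc of $\beta$ on either side of $\gamma'$ is boundary-parallel, and $\sigma_1,\sigma_2$ are in minimal position with $\alpha^1$. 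Hence there are no bigons, and $i(\alpha,\beta)=3$.
\item Let $a\subset\sigma_1$ be the subarc between its two crossings; it joins the two sides of $\alpha^1$. Since $N(\alpha^1\cup a)$ is a one-holed torus in $T'$, its surgery curve is $\gamma'$. This gives $i(\gamma',\beta)=4$ and $i(\tilde\alpha,\beta)=7>6$.
\end{itemize}
Here $\alpha^1\cup\beta$ fills $\Sigma$, so Case~1 cannot help. The paper's case (iii) with $C_1\neq C_2$ asserts the same bound without treating this situation, so following the paper does not supply the missing step.

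\textbf{A possible repair.} This configuration needs its own argument. Let $c^\pm$ be the two sides of $\alpha^j$ and $m=i(\alpha^j,\beta)$.
\begin{itemize}
\item If some arc at $c^\pm$ has both ends on one side, or an end on a side of a different curve of $\alpha$, your two surgeries already give at most $i(\alpha,\beta)$.
\item So assume all $m$ arcs at $c^+$ run to $c^-$. If they are all parallel, surgery along the whole band gives a curve disjoint from $\beta$.
\item Otherwise take non-parallel arcs $a,a'$ with adjacent endpoints on $c^+$. Let $\delta$ be the boundary curve of the four-holed sphere $N(c^+\cup a\cup a'\cup c^-)$ that runs along the $\beta$-free arc of $c^+$ between them. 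It meets $\beta$ only along one arc of $c^-$, hence at most $m-2$ times.
\item If $\delta$ is inessential or peripheral in $S$, then $\delta\cap\beta=\emptyset$. Indeed, an arc of $\beta$ entering the region beyond $\delta$ from $c^-$ could never reach $c^+$.
\item That region is not a disk, since otherwise $a\parallel a'$. Then the curve enclosing $c^+$ and $\delta$ meets $\beta$ exactly $m$ times. It is essential and non-peripheral in $S$ because $S$ is not a pair of pants.
\end{itemize}
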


\begin{proof}
    Choose representatives $a$ and $b$ of the multicurves $\alpha$ and $\beta$ that realize minimal intersection number in their homotopy classes, and regard them as subsets of $\Sigma$.
    Since $k \le 3g-4+b$, there exists a connected component $X$ of $\Sigma \setminus a$ that is not a pair of pants.
    Consequently, each connected component of $X\cap b$ is a subarc of the curve $b$ or a closed curve which is a component of $b$.
    Let $c_1,\ldots,c_m \subset b$ be all connected components of $X\cap b$.
    Then the number $m$ of connected components of $X\cap b$ is at most the geometric intersection number $i(\alpha,\beta)$.

    We define a new simple closed curve $a' \subset X$ as follows.
    We set $X'=X\setminus \bigcup_{i=1}^m c_i$.

    (i) If there is a simple closed curve among $c_1,\ldots,c_m$, we let $a'$ be that curve.

    (ii) If there is no simple closed curve among $\{c_1,\ldots,c_m\}$ and the subsurface $X'$ contains a simple closed curve which is essential in that subsurface, we let $a'$ be that curve.

    (iii) Finally, we consider the case where there is no simple closed curve among ${c_1,\ldots,c_m}$ and the subsurface $X'$ contains no simple closed curve that is essential in that subsurface; that is, each connected component of $X'$ is either a pair of pants, an annulus, or a disk.
    Let $C_1$ and $C_2$ be the boundary components of $X$ on which the endpoints of the subarc $c_1$ lie (possibly $C_1 = C_2$).
    Then the isotopy classes of both $C_1$ and $C_2$ are components of $\alpha$.

    \begin{figure}
       \centering
       \begin{overpic}[]{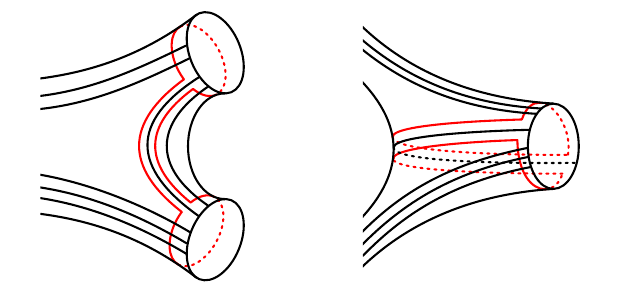}
           \put(40,40){$C_1$}
           \put(40,5){$C_2$}
           \put(20,28){\textcolor{red}{$a'$}}
           \put(92,28){$C_1=C_2$}
           \put(67,28){\textcolor{red}{$a'$}}
       \end{overpic}
       \caption{}
       \label{figure:CurveCreation}
    \end{figure}

    First, suppose that $C_1 \neq C_2$.
    The boundary of a regular neighborhood of $c_1 \cup C_1 \cup C_2$ in $X$ has a single component.
    Let $a'$ denote this boundary component (see the left part of \cref{figure:CurveCreation}).
    Then the simple closed curve $a'$ is essential in $X$.
    If $a'$ is not essential in $X$, then it is homotopic to a boundary component of $X$ or to a point.
    This implies that $X$ is either a pair of pants or an annulus, contradicting the choice of $X$.

    Next, suppose that $C_1 = C_2$, that is, the endpoints of the arc $c_1$ lie on $C_1$.
    Then the boundary of a regular neighborhood of $c_1 \cup C_1$ in $X$ has two components, say $\partial_1$ and $\partial_2$.
    At least one of $\partial_1$ and $\partial_2$ is essential in $X$.
    If neither $\partial_1$ nor $\partial_2$ is essential in $X$, then each $\partial_i$ $(i=1,2)$ is homotopic to a boundary component of $X$ or to a point.
    This implies that $X$ is either a pair of pants, an annulus, or a disk, contradicting the choice of $X$.
    In this case, we let $a'$ be an essential one of $\partial_1$ and $\partial_2$ (see the right part of \cref{figure:CurveCreation}).

    In each of the cases (i)--(iii), the intersection number $i(a',b)$ is at most $i(\alpha,\beta)$.
    We set $\tilde{\alpha} = \alpha \cup \{[a']\}$, where $[a']$ denotes the isotopy class of the simple closed curve $a'$.
    Since $a'$ is disjoint from the representative multicurve $a$ of $\alpha$, the set $\tilde{\alpha}$ is a $(k+1)$-multicurve.
    Therefore, we have
    \begin{equation*}
        i(\tilde{\alpha},\beta)
        = i(\alpha,\beta) + i([a'],\beta)
        \le 2\, i(\alpha,\beta),
    \end{equation*}
    since $i([a'],\beta) \le i(a',b) \le i(\alpha,\beta)$.
\end{proof}

By consecutively using \cref{lemma:Extension}, we obtain the following.

\begin{proposition}\label{proposition:IntersectionlessExtension}
    Let $k$ be an integer with $1\leq k \leq 3g-4+n$.
    For any two $k$-multicurves $\alpha,\beta\in\multicurve{k}(\Sigma)$, there exist pants decompositions $\tilde{\alpha}, \tilde{\beta} \in \mathcal{P}(\Sigma)$ such that $\alpha\subset \tilde{\alpha}$, $\beta\subset \tilde{\beta}$ and 
    \begin{equation*}
        i(\tilde{\alpha},\tilde{\beta}) \leq 4^{3g-3+n-k} i(\alpha,\beta).
    \end{equation*}
\end{proposition}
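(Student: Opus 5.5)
We argue by induction, extending $\alpha$ and $\beta$ alternately one curve at a time with \cref{lemma:Extension}. Each extension at most doubles the current intersection number. We need $3g-3+n-k$ extensions on each side, so $2(3g-3+n-k)$ doublings in total, which gives the factor $2^{2(3g-3+n-k)}=4^{3g-3+n-k}$.

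Concretely, set $\alpha_0=\alpha$, $\beta_0=\beta$ and $N=3g-3+n-k$. For $j=0,\ldots,N-1$ we do the following. Since $\alpha_j$ is a $(k+j)$-multicurve with $k+j\le 3g-4+n$, we apply \cref{lemma:Extension} to $\alpha_j$ and the multicurve $\beta_j$. This gives a $(k+j+1)$-multicurve $\alpha_{j+1}\supset\alpha_j$ with $i(\alpha_{j+1},\beta_j)\le 2\,i(\alpha_j,\beta_j)$. We then apply \cref{lemma:Extension} with the roles swapped, to $\beta_j$ and the multicurve $\alpha_{j+1}$. This gives $\beta_{j+1}\supset\beta_j$ with $i(\alpha_{j+1},\beta_{j+1})\le 2\,i(\alpha_{j+1},\beta_j)\le 4\,i(\alpha_j,\beta_j)$.

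After $N$ steps, $\tilde\alpha=\alpha_N$ and $\tilde\beta=\beta_N$ are $(3g-3+n)$-multicurves, that is, pants decompositions. They contain $\alpha$ and $\beta$ respectively, and
\begin{equation*}
i(\tilde\alpha,\tilde\beta)\le 4^{N}\,i(\alpha,\beta),
\end{equation*}
as claimed.

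The only point needing care is the hypotheses of \cref{lemma:Extension}. In the second application the role of the "$l$-multicurve" is played by $\alpha_{j+1}$, whose size $l=k+j+1$ may reach $3g-3+n$, which exceeds the stated range $l\le 3g-4+n$. However, the proof of the lemma uses only that the multicurve being extended is not a pants decomposition; nothing constrains the other multicurve $\beta$. So the lemma holds for any multicurve in the second slot, including a full pants decomposition, and I would note this explicitly.

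The case $i(\alpha,\beta)=0$ is harmless, since every bound is then $0$; the construction in the lemma still produces an essential disjoint curve via case (ii), because $X'=X$ contains essential curves. I expect no real obstacle beyond this bookkeeping of the hypotheses.
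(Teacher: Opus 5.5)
Your proof is correct and matches the paper's approach: the paper's proof is the single line that the proposition follows by applying \cref{lemma:Extension} consecutively, and your alternating scheme spells this out, with $2(3g-3+n-k)$ doublings giving the factor $4^{3g-3+n-k}$. You also correctly catch a point the paper glosses over: the last application uses a full pants decomposition as the second multicurve, which is outside the stated range $l\le 3g-4+n$, but the lemma's proof only uses $k\le 3g-4+n$, so this is harmless.
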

By the following lemma, the pants graph distance gives an upper bound for the $k$-multicurve graph distance.

\begin{lemma}\label{lemma:UpperBoundForK-multicurveDistance}
Let $\Sigma=\Sigma_{g,n}$ be a surface of complexity $\xi(\Sigma)=3g-3+n$, and let $k$ be an integer with $1\le k\le \xi(\Sigma)-1$.
For any two $k$-multicurves $\alpha,\beta\in\multicurve{k}(\Sigma)$ and any pants decompositions
$\tilde{\alpha},\tilde{\beta}\in\mathcal{P}(\Sigma)$ with $\alpha\subset\tilde{\alpha}$ and $\beta\subset\tilde{\beta}$, we have
\begin{equation*}
    d_{\multicurve{k}}(\alpha,\beta)\leq d_{\mathcal{P}}(\tilde{\alpha},\tilde{\beta})\ +f(k),
\end{equation*}
where $f(k)=\min\{k, \xi(\Sigma)-k\}$.

\end{lemma}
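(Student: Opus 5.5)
Let $N=d_{\mathcal{P}}(\tilde\alpha,\tilde\beta)$ and fix a geodesic $\tilde\alpha=P_0,P_1,\dots,P_N=\tilde\beta$ in $\mathcal{P}(\Sigma)$. Consecutive pants decompositions $P_{j-1},P_j$ share a $(\xi(\Sigma)-1)$-multicurve $\nu_j$ and differ by one elementary move: a curve $x_{j-1}$ is replaced by a curve $x_j$, both lying in the complexity-one component of $\Sigma\setminus\nu_j$. We will build a path of $k$-multicurves $\alpha_0,\alpha_1,\dots,\alpha_N$ with $\alpha_j\subset P_j$ and $d_{\multicurve{k}}(\alpha_{j-1},\alpha_j)\le 1$. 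After that we connect $\alpha$ to $\alpha_0$ and $\alpha_N$ to $\beta$ inside $\tilde\alpha$ and $\tilde\beta$, and make sure these two connecting pieces together cost at most $f(k)$.

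\emph{Step 1: transport along the pants path.} Set $\alpha_0=\alpha$ for now. Suppose $\alpha_{j-1}\subset P_{j-1}$ is a $k$-multicurve. If $x_{j-1}\notin\alpha_{j-1}$, then $\alpha_{j-1}\subset\nu_j\subset P_j$, so we put $\alpha_j=\alpha_{j-1}$. If $x_{j-1}\in\alpha_{j-1}$, put $\alpha_j=(\alpha_{j-1}\setminus\{x_{j-1}\})\cup\{x_j\}$. This is a $k$-multicurve, since $x_j$ is disjoint from $\nu_j$ and differs from every curve of $\nu_j$. Its common $(k-1)$-multicurve with $\alpha_{j-1}$ is $\mu=\alpha_{j-1}\setminus\{x_{j-1}\}$.

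\emph{Step 2: check this is an edge.} For $k<\xi(\Sigma)$ an edge needs only $i(x_{j-1},x_j)=0$, but an elementary move has $i(x_{j-1},x_j)\in\{1,2\}$. This is the main obstacle, and I would handle it by inserting an intermediate vertex. Since $k\le\xi(\Sigma)-1$, the set $\nu_j\setminus\mu$ has $\xi(\Sigma)-k\ge1$ curves, so we can pick a curve $y\in\nu_j\setminus\mu$. Then $\alpha_{j-1}\to\mu\cup\{y\}\to\alpha_j$ is a path of two edges, each swapping disjoint curves. That gives length $2$ per move, not $1$.

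To get a factor $1$, change the bookkeeping: allow $\alpha_j$ to be any $k$-submulticurve of $P_j$. When $x_{j-1}\in\alpha_{j-1}$, take instead $\alpha_j=\mu\cup\{y\}$ with $y\in\nu_j\setminus\mu$. This is adjacent to $\alpha_{j-1}$, because $i(x_{j-1},y)=0$, and it lies in $\nu_j\subset P_j$. So each pants move costs at most one $\multicurve{k}$ step, and we end with a $k$-multicurve $\alpha_N\subset\tilde\beta$ after at most $N$ steps.

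\emph{Step 3: the end correction and $f(k)$.} We now have $\alpha_N,\beta\subset\tilde\beta$, and must bound the distance between two $k$-submulticurves of one pants decomposition $P$. Curves of $P$ are pairwise disjoint, so replacing one curve of $\alpha_N\setminus\beta$ by one curve of $\beta\setminus\alpha_N$ is an edge. Hence the distance is at most $|\beta\setminus\alpha_N|\le\min\{k,\xi(\Sigma)-k\}=f(k)$, because $\beta\setminus\alpha_N$ has at most $k$ elements and is contained in $P\setminus\alpha_N$, which has $\xi(\Sigma)-k$ elements. Combining the two parts gives $d_{\multicurve{k}}(\alpha,\beta)\le N+f(k)$.

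The delicate point is Step 2: we must confirm that $\nu_j\setminus\mu$ is nonempty, which is exactly where the hypothesis $k\le\xi(\Sigma)-1$ is used, and that the greedy re-choice of $y$ never breaks the invariant $\alpha_j\subset P_j$.
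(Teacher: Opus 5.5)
Your proposal is correct and follows the same route as the paper: you transport a $k$-submulticurve along a pants-graph geodesic at cost at most one edge per move, then correct inside $\tilde\beta$ at cost at most $|\beta\setminus\alpha_N|\le f(k)$. In Step 2 you choose the replacement $y$ from $\nu_j\setminus\mu$, which is nonempty because $k\le\xi(\Sigma)-1$; this is slightly more careful than the paper, whose choice of ``any curve $c_i'\in P_i\setminus(\gamma_{i-1}\setminus\{\delta_i\})$'' would also allow the new curve $\delta_i'$, which meets $\delta_i$ and so does not give an edge of $\multicurve{k}(\Sigma)$ when $k<\xi(\Sigma)$.
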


\begin{proof}
Let $\tilde{\alpha}=P_0,P_1,\ldots,P_m=\tilde{\beta}$ be a geodesic segment in the pants graph $\mathcal{P}(\Sigma)$ from $\tilde{\alpha}$ to $\tilde{\beta}$, so that
$m=d_{\mathcal{P}}(\tilde{\alpha},\tilde{\beta})$.

We inductively construct a sequence of $k$-multicurves $\gamma_0,\gamma_1,\ldots,\gamma_m $ such that
$\gamma_i\subset P_i$ for every $i$.
First, we set $\gamma_0\coloneqq \alpha\subset P_0$.
Next, if $\gamma_{i-1}\subset P_i$, then we define $\gamma_i:=\gamma_{i-1}$.
We consider the case of $\gamma_{i-1}\not\subset P_i$.
Since $P_{i-1}$ and $P_i$ are adjacent in the pants graph, there exist curves $\delta_i\in P_{i-1}$ and $\delta_i'\in P_i$ such that $P_{i-1}\setminus\{\delta_i\}$ coincides with $P_i\setminus\{\delta_i'\}$.
Now $\gamma_{i-1}$ is not contained in $P_i$; therefore $\gamma_{i-1}$ must contain $\delta_i$ as a component.
Choosing any curve $c_i'\in P_i\setminus(\gamma_{i-1}\setminus\{\delta_i\})$,
we define $\gamma_i:=(\gamma_{i-1}\setminus\{\delta_i\})\cup\{c_i'\}$.
Then $\gamma_i\subset P_i$ and the two $k$-multicurves $\gamma_{i-1}$ and $\gamma_i$ differ by replacing one curve, so $d_{\multicurve{k}}(\gamma_{i-1},\gamma_i)\leq 1$ for every $i=1,\ldots,m$.

Now $\gamma_m\subset P_m=\tilde{\beta}$ and $\beta\subset \tilde{\beta}$.
For a fixed pants decomposition $P$ of $\Sigma$, any two $k$-multicurves contained in $P$ can be
connected in $\multicurve{k}(\Sigma)$ by successively replacing one curve at a time, and the number of
replacements needed is at most $\min\{k,\xi(\Sigma)-k\}=f(k)$.
Therefore, we have $d_{\multicurve{k}}(\gamma_m,\beta)\le f(k)$.
Combining these estimates and using the triangle inequality, we obtain
\begin{align*}
    d_{\multicurve{k}}(\alpha,\beta) &\leq \sum_{i=1}^m d_{\multicurve{k}}(\gamma_{i-1},\gamma_i) + d_{\multicurve{k}}(\gamma_m,\beta) \\
    & \leq m+f(k) =d_{\mathcal{P}}(\tilde{\alpha},\tilde{\beta})+f(k),
\end{align*}
as desired.
\end{proof}

\begin{corollary}\label{corollary:UpperBoundForTheK-multicurveDistance}
    Let $\Sigma=\Sigma_{g,n}$ be a surface with complexity $\xi(\Sigma)=3g-3+n$, and let $k$ be an integer with $1\le k\le \xi(\Sigma)-1$.
    For any $k$-multicurves $\alpha,\beta\in\multicurve{k}(\Sigma)$, we have 
    \begin{equation*}
        d_{\multicurve{k}}(\alpha,\beta)\leq 6\cdot 4^{6g-6+2n-2k}i(\alpha,\beta)^2+f(k),
    \end{equation*}
    where $f(k)=\min\{k,\xi(\Sigma)-k\}$.
\end{corollary}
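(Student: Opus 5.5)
The plan is to chain together the three results just established. Given $k$-multicurves $\alpha,\beta\in\multicurve{k}(\Sigma)$ with $1\le k\le \xi(\Sigma)-1$, I will first apply \cref{proposition:IntersectionlessExtension}. Its hypothesis $k\le 3g-4+n$ is exactly $k\le\xi(\Sigma)-1$. It produces pants decompositions $\tilde{\alpha}\supset\alpha$ and $\tilde{\beta}\supset\beta$ with
\begin{equation*}
    i(\tilde{\alpha},\tilde{\beta})\le 4^{3g-3+n-k}\, i(\alpha,\beta).
\end{equation*}
If needed, I would justify this proposition by iterating \cref{lemma:Extension} $\xi(\Sigma)-k$ times, alternately enlarging the $\alpha$-side and the $\beta$-side. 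Each step at most doubles the intersection number, and each of the $\xi(\Sigma)-k$ extension stages (one on each side) contributes a factor of at most $4$.

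Next I will apply \cref{theorem:QuadraticBound} to the pair $\tilde{\alpha},\tilde{\beta}$:
\begin{equation*}
    d_{\mathcal{P}}(\tilde{\alpha},\tilde{\beta})\le 6\, i(\tilde{\alpha},\tilde{\beta})^2\le 6\cdot 4^{2(3g-3+n-k)}\, i(\alpha,\beta)^2 = 6\cdot 4^{6g-6+2n-2k}\, i(\alpha,\beta)^2 .
\end{equation*}
Finally, \cref{lemma:UpperBoundForK-multicurveDistance} gives $d_{\multicurve{k}}(\alpha,\beta)\le d_{\mathcal{P}}(\tilde{\alpha},\tilde{\beta})+f(k)$, since its hypotheses $\alpha\subset\tilde\alpha$ and $\beta\subset\tilde\beta$ hold by construction. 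Combining the two displays yields the claimed bound.

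The argument is a direct composition, so there is no real obstacle. The only point needing care is the bookkeeping of the exponent, namely that squaring $4^{\xi(\Sigma)-k}$ gives $4^{6g-6+2n-2k}$, together with checking that the range $1\le k\le\xi(\Sigma)-1$ matches the hypotheses of all three cited results. The endpoint $k=\xi(\Sigma)$ is handled separately by \cref{theorem:QuadraticBound} itself, since there $f(k)=0$ and the exponent is $0$.
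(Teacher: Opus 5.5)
Your proposal is correct and is the paper's proof: you extend $\alpha,\beta$ to pants decompositions via \cref{proposition:IntersectionlessExtension}, apply the quadratic bound of \cref{theorem:QuadraticBound}, and return to $\multicurve{k}(\Sigma)$ through \cref{lemma:UpperBoundForK-multicurveDistance}, with the same exponent bookkeeping $(4^{\xi(\Sigma)-k})^2=4^{6g-6+2n-2k}$. One small point about your optional justification: whatever order you iterate \cref{lemma:Extension} in, the last extension is made while the other multicurve is already a pants decomposition, so its stated hypothesis $l\le 3g-4+n$ is violated there. This is harmless, since the lemma's proof never uses any bound on $l$, and the paper's own ``consecutively using'' remark has the same implicit relaxation.
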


\begin{proof}
    For any $k$-multicurves $\alpha,\beta\in\multicurve{k}(\Sigma)$, let $\tilde{\alpha},\tilde{\beta}\in \mathcal{P}(\Sigma)$ be the pants decompositions in \cref{proposition:IntersectionlessExtension}.
    Then we have 
    \begin{align*}
        d_{\multicurve{k}}(\alpha,\beta) &\leq d_{\mathcal{P}}(\tilde{\alpha},\tilde{\beta})+f(k) & \text{by \cref{lemma:UpperBoundForK-multicurveDistance}}\\
        & \leq 6i(\tilde{\alpha},\tilde{\beta})^2+f(k) & \text{by \cref{theorem:QuadraticBound}} \\
        & \leq 6\cdot 4^{6g-6+2n-2k}i(\alpha,\beta)^2+f(k) & \text{by \cref{proposition:IntersectionlessExtension}.}
    \end{align*}
\end{proof}

\begin{remark}
In the above proof, if we use \Cref{theorem:HempelLemmaForPantsGraph} instead of \Cref{theorem:QuadraticBound}, we obtain an upper bound on the distance in the $k$-multicurve graph given by a logarithmic function of the intersection number.
\end{remark}

\section{Quasi-isometry between $\elTeich^k$ and $\multicurve{k}$}

Fix a sufficiently small constant $\epsilon_0>0$ so that the collar lemma holds, and let $k$ be an integer with $1 \le k \le 3g-3+n$.
For any $k$-multicurve $\alpha = \{\alpha^1,\ldots,\alpha^k\} \in \multicurve{k}(\Sigma)$,
we define a subset $\Thin_\alpha \subset \Teich(\Sigma)$ by
\begin{equation*}
    \Thin_\alpha
    = \bigl\{ X \in \Teich(\Sigma) \mid \Ext_X(\alpha^i) \le \epsilon_0
    \text{ for each } \alpha^i \in \alpha \bigr\}.
\end{equation*}

Let $(\elTeich^k(\Sigma), d_e)$ denote the metric space obtained by electrifying the metric space $(\Teich(\Sigma),d_{\Teich})$ along the family $\{\Thin_\alpha\}_{\alpha \in \multicurve{k}(\Sigma)}$.
For each $\alpha \in \multicurve{k}(\Sigma)$, let $v_\alpha \in \elTeich^k(\Sigma)$
denote the vertex corresponding to $\Thin_\alpha$ that is added in the electrification.
We define a map
\begin{equation*}
    I \colon \multicurve{k}(\Sigma) \longrightarrow \elTeich^k(\Sigma)
\end{equation*}
by $I(\alpha) = v_\alpha$.
The purpose of this section is to prove the following theorem.

\begin{theorem}\label{theorem:K-ElectricTeichmullerSpaceAndK-multicurveGraphAreQI}
    The map $I \colon \multicurve{k}(\Sigma) \to \elTeich^k(\Sigma)$ is a   quasi-isometry.
\end{theorem}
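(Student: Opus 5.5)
We follow the Masur--Minsky strategy for \cite[Theorem~1.2]{masur1999complex} and verify three things: coarse surjectivity of $I$, a Lipschitz upper bound $d_e(v_\alpha,v_\beta)\le K d_{\multicurve{k}}(\alpha,\beta)+L$, and a coarse lower bound $d_{\multicurve{k}}(\alpha,\beta)\le K d_e(v_\alpha,v_\beta)+L$.

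\textbf{Coarse surjectivity.} Take $X\in\Teich(\Sigma)$. By \cref{corollary:UniformBoundForExtremalLength} there is $\alpha\in\multicurve{k}(\Sigma)$ with $\Ext_X(\alpha^i)\le\delta_0$ for every $i$. Shrinking every component of $\alpha$ at once moves us into $\Thin_\alpha$ within bounded Teichmüller distance. Concretely, run the Teichmüller geodesic from $X$ along $q_\alpha$ (or use Kerckhoff's formula, \cref{theorem:KerckhoffFormula}). Extremal lengths of curves of $\alpha$ decay like $e^{-2t}$, so after time $t_0=\frac12\log(k\delta_0/\epsilon_0)$ we reach a point $Y\in\Thin_\alpha$. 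Hence $d_e(X,v_\alpha)\le t_0+1/2$. Every point of the added cone segments is within $1/2$ of $\Teich(\Sigma)$, so $I$ is coarsely onto.

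\textbf{Upper bound.} It suffices to treat adjacent $\alpha,\beta$. They share a $(k-1)$-multicurve $\nu$, and the remaining curves $a,b$ have bounded intersection (at most $2$). Then $\alpha\cup\beta$ fills a bounded-complexity configuration, and we want a single $X$ with bounded Teichmüller distance to both $\Thin_\alpha$ and $\Thin_\beta$. If $k<\xi(\Sigma)$, then $\alpha\cup\beta$ is a $(k+1)$-multicurve, so $\Thin_\alpha\cap\Thin_\beta\neq\emptyset$ and $d_e(v_\alpha,v_\beta)\le1$. In the pants case we take $X$ with $\Ext_X(\nu^j)\le\epsilon_0$ and $a,b$ of extremal length bounded by a constant depending on $g,n$; such an $X$ exists by choosing Fenchel--Nielsen coordinates. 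The previous paragraph's shrinking argument then gives bounded distance to both thin sets. So $d_e(v_\alpha,v_\beta)\le C\,d_{\multicurve{k}}(\alpha,\beta)$.

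\textbf{Lower bound (main obstacle).} Given a path in $\elTeich^k$ from $v_\alpha$ to $v_\beta$ of length $D$, we discretize it. It splits into at most $D+1$ Teichmüller segments, separated by passages through cone points $v_\gamma$. Subdivide each Teichmüller segment into pieces of length $\le1$ with endpoints $X_0,\dots,X_N$, where $N\lesssim 2D+1$. To each $X_j$ assign a $k$-multicurve $\gamma_j$ with $\Ext_{X_j}(\gamma_j^i)\le\delta_0$, via \cref{corollary:UniformBoundForExtremalLength}. At a cone passage through $v_\gamma$, the points on either side lie in $\Thin_\gamma$, and we may assign $\gamma$ itself, since $\epsilon_0\le\delta_0$ after adjusting constants.

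Consecutive multicurves then come from points $X,Y$ with $d_{\Teich}(X,Y)\le1$. By Kerckhoff's formula, $\Ext_Y(\gamma')\le e^{2}\Ext_X(\gamma')$. Minsky's inequality (\cref{lemma:MinskyInequality}) then bounds $i(\gamma,\gamma')\le\sqrt{\Ext_X(\gamma)\Ext_X(\gamma')}\le e\,k\delta_0$, using that extremal length of a multicurve is at most $k$ times the sum of its components' extremal lengths, as in \cref{corollary:UniformBoundForExtremalLength}. \Cref{theorema:E} (\cref{corollary:UpperBoundForTheK-multicurveDistance}) then converts this into a uniform bound $d_{\multicurve{k}}(\gamma,\gamma')\le M$. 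In the pants case $k=\xi(\Sigma)$ we use \cref{theorem:QuadraticBound} directly.

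Summing along the chain gives $d_{\multicurve{k}}(\alpha,\beta)\le M(2D+2)$, which is the lower bound. The delicate point is the bookkeeping: thin sets need not be path-connected, so one must either take a path-metric convention or note that the argument only uses endpoints of segments. One must also make sure the constants $\epsilon_0$ and $\delta_0$ are compatible.
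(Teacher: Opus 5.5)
Your proposal is correct and is essentially the paper's proof. Both arguments get quasi-density from \cref{corollary:UniformBoundForExtremalLength} plus shrinking along a Teichmüller geodesic, the Lipschitz bound from $\Thin_\alpha\cap\Thin_\beta\neq\emptyset$ when $k<\xi(\Sigma)$ and a bounded-length Fenchel--Nielsen point in the pants case, and the lower bound by cutting a near-geodesic electric path at cone points and chaining Kerckhoff's formula, Minsky's inequality and \Cref{theorema:E} over unit Teichmüller steps (labelling segment endpoints by the cone multicurve just merges the paper's map $\Phi$ with \cref{lemma:QuasiDenistiyOfQuasiInverse}). The only slips are in constants, and none affects the argument: per-component bounds $\delta_0$ give $\Ext_X(\alpha)\le k^2\delta_0$, so $t_0$ and your intersection bound should involve $k^2$ rather than $k$, and the Kerckhoff inequality you actually need is $\Ext_X(\gamma')\le e^{2}\Ext_Y(\gamma')$, which holds by symmetry.
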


Using the upper bound on the distance in the $k$-multicurve graph in terms of the intersection number (\cref{corollary:UpperBoundForTheK-multicurveDistance}), we prove \cref{theorem:K-ElectricTeichmullerSpaceAndK-multicurveGraphAreQI} in essentially the same way as in \cite{masur1999complex}.
For the sake of completeness, we include the proof below.

\begin{lemma}\label{lemma:QuasiDensityOfI}
    The map $I$ is $(\frac12 \log \frac{\delta_0}{\epsilon_0} + \frac12)$-quasi-dense, where the constant $\delta_0$ depends only on $\Sigma$, as in \Cref{corollary:UniformBoundForExtremalLength}.
\end{lemma}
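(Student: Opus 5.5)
We must show every point of $\elTeich^k(\Sigma)$ lies within distance $\frac12\log\frac{\delta_0}{\epsilon_0}+\frac12$ of some cone vertex $v_\alpha$. Points of $\elTeich^k(\Sigma)$ are of two kinds: points of $\Teich(\Sigma)$, and points on the added cone segments. A point on a cone segment attached to $v_\alpha$ lies within distance $\frac12$ of $v_\alpha = I(\alpha)$, so it suffices to treat $X\in\Teich(\Sigma)$. For such $X$ the plan is to find $\alpha$ and $Y\in\Thin_\alpha$ with $d_{\Teich}(X,Y)\le \frac12\log\frac{\delta_0}{\epsilon_0}$. The bound then follows from
\[
d_e(X,v_\alpha)\le d_e(X,Y)+d_e(Y,v_\alpha)\le d_{\Teich}(X,Y)+\tfrac12 .
\]

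\textbf{Choosing $\alpha$.} By \Cref{corollary:UniformBoundForExtremalLength}, pick $\alpha=\{\alpha^1,\dots,\alpha^k\}$ with $\Ext_X(\alpha)\le\delta_0$. Each component then satisfies $\Ext_X(\alpha^i)\le\Ext_X(\alpha)\le\delta_0$. This holds because extremal length is monotone under adding disjoint components: for any conformal metric $\rho$, $\ell_\rho(\alpha^i)\le \ell_\rho(\alpha)$, and we take the supremum over $\rho$. If $\delta_0\le\epsilon_0$, then $X\in\Thin_\alpha$ already and we take $Y=X$. So assume $\delta_0>\epsilon_0$ (we may enlarge $\delta_0$ if needed).

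\textbf{Pinching $\alpha$ (the main step).} Take the Jenkins--Strebel differential $q_\alpha$ on $X$ whose horizontal foliation is the multicurve $\alpha$, given with unit weights via Hubbard--Masur. Deform along the Teichmüller geodesic that contracts the leaves of $\alpha$ by a factor $K$, i.e. $Y=X_{-q_\alpha,K}$, which stretches the transverse direction. For the cylinder $C_i$ of $q_\alpha$ around $\alpha^i$, the modulus becomes $K\cdot\Mod(C_i)$. Hence
\[
\Ext_Y(\alpha^i)\le \frac{1}{K\,\Mod(C_i)},
\]
since extremal length is at most the reciprocal of the modulus of any embedded annulus in the class. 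To compare with $\Ext_X(\alpha^i)$ we would need $1/\Mod(C_i)$ to be controlled, which is not immediate for a multicurve differential.

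A cleaner route is Kerckhoff's formula (\Cref{theorem:KerckhoffFormula}). For the Teichmüller map in direction $q_\alpha$ with stretch $K=e^{2t}$, the minimizing foliation is the one being contracted. Arranging that this is $\alpha$, we get
\[
\Ext_Y(\alpha)=e^{-2t}\,\Ext_X(\alpha)\le e^{-2t}\delta_0 .
\]
Then each component satisfies $\Ext_Y(\alpha^i)\le\Ext_Y(\alpha)\le e^{-2t}\delta_0$. Choosing $t=\frac12\log\frac{\delta_0}{\epsilon_0}$ gives $Y\in\Thin_\alpha$ with $d_{\Teich}(X,Y)=t$, as required.

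The expected obstacle is justifying the exact scaling $\Ext_{Y}(\alpha)=e^{-2t}\Ext_X(\alpha)$ along the geodesic determined by $q_\alpha$. I would verify it directly. For a Jenkins--Strebel differential, $\Ext_X(\alpha)$ is realized by the flat metric $|q_\alpha|$, and Teichmüller deformation in the appropriate direction multiplies the circumferences by $K^{-1/2}$ and the heights by $K^{1/2}$. The extremal metric for $\alpha$ on $Y$ is again the pushed-forward $|q|$, because the image differential is still Jenkins--Strebel with the same core curves. This yields the factor $1/K=e^{-2t}$. Only the inequality $\le$ is actually needed, and it follows from the modulus computation: each cylinder's modulus is multiplied by $K$, and the extremal length of $\alpha$ equals $\sum_i 1/\Mod$-type quantities computed from the cylinders of the Jenkins--Strebel differential. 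This is the step to check carefully, including the sign or orientation of $q$ so that it is $\alpha$, rather than its transverse foliation, that shrinks.
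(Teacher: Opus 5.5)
Your proposal is correct and follows the paper's proof. You pick $\alpha$ with $\Ext_X(\alpha)\le\delta_0$ by \Cref{corollary:UniformBoundForExtremalLength}, run the Teichmüller geodesic $X_t=X_{-q_\alpha,e^{2t}}$ so that Kerckhoff's formula gives $\Ext_{X_t}(\alpha)=e^{-2t}\Ext_X(\alpha)$ (since $\alpha$ is the vertical foliation of $-q_\alpha$), and stop at $t=\frac12\log\frac{\delta_0}{\epsilon_0}$. Your extra points are sound details the paper leaves implicit: cone-segment points lie within $\frac12$ of a cone vertex, one needs $\delta_0\ge\epsilon_0$, and the Jenkins--Strebel moduli computation gives an alternative check of the inequality.
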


\begin{proof}
    By \Cref{corollary:UniformBoundForExtremalLength}, for each $X \in \Teich(\Sigma)$ there exists a $k$-multicurve $\alpha = \{\alpha^1,\ldots,\alpha^k\}$ such that $\Ext_X(\alpha) \le \delta_0$.
    Let $q_\alpha$ be the holomorphic quadratic differential on $X$ whose horizontal measured foliation is $\alpha$.
    Let $\{X_t\}_{t \ge 0}$ be the Teichmüller geodesic from $X$ in the direction of $-q_\alpha$, where $X_t \coloneqq X_{-q_\alpha,e^{2t}}$.
    This Teichmüller geodesic $\{X_t\}_{t \ge 0}$ contracts the horizontal direction of $q_\alpha$.
    Then, by \Cref{theorem:KerckhoffFormula}, for every $t \ge 0$,
    \begin{equation*}
        \frac{\Ext_{X_t}(\alpha)}{\Ext_X(\alpha)} = e^{-2t}.
    \end{equation*}
    If $t = \tfrac12 \log \tfrac{\delta_0}{\epsilon_0}$, then
    \begin{equation*}
        \Ext_{X_t}(\alpha^i) \leq \Ext_{X_t}(\alpha) = \frac{\epsilon_0}{\delta_0}\Ext_X(\alpha) \leq \epsilon_0
    \end{equation*}
    for each $\alpha^i \in \alpha$.
    Therefore, we have $X_t \in \Thin_\alpha$, and hence
    \begin{equation*}
        d_e(X, v_\alpha)
        \leq \frac12 \log \frac{\delta_0}{\epsilon_0} + \frac12.
    \end{equation*}
\end{proof}

\begin{lemma}\label{lemma:LipschitzPropertyForTheMapI1}
    Suppose that $k$ is an integer with $1 \le k \le 3g-4+n$.
    Then, for any $\alpha,\beta \in \multicurve{k}(\Sigma)$, we have
    \begin{equation*}
        d_e(v_\alpha, v_\beta) \le d_{\multicurve{k}}(\alpha,\beta).
    \end{equation*}
\end{lemma}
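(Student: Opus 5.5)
By the triangle inequality along a geodesic in $\multicurve{k}(\Sigma)$, it suffices to treat adjacent vertices. So let $\alpha,\beta$ be joined by an edge, and we aim to show $d_e(v_\alpha,v_\beta)\le 1$. For $k\le 3g-4+n$, adjacency means $\alpha=\nu\cup\{a\}$ and $\beta=\nu\cup\{b\}$ with $\nu$ a common $(k-1)$-multicurve and $i(a,b)=0$. Hence $\alpha\cup\beta=\nu\cup\{a,b\}$ is a $(k+1)$-multicurve, with $k+1\le 3g-3+n$ so that it makes sense.

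The key step is to produce a point $X\in\Teich(\Sigma)$ lying in $\Thin_\alpha\cap\Thin_\beta$. Then, since every point of $\Thin_\alpha$ is joined to $v_\alpha$ by a segment of length $1/2$, and likewise for $\beta$, we get
\begin{equation*}
d_e(v_\alpha,v_\beta)\le d_e(v_\alpha,X)+d_e(X,v_\beta)=\tfrac12+\tfrac12=1.
\end{equation*}

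To construct $X$, I would argue as in the proof of \Cref{lemma:QuasiDensityOfI}, applied to the multicurve $\gamma=\alpha\cup\beta$.
\begin{enumerate}
\item Start from any $X_0\in\Teich(\Sigma)$ and let $q_\gamma$ be the quadratic differential on $X_0$ whose horizontal foliation is $\gamma$, with each component given weight one.
\item Move along the Teichmüller geodesic in the direction of $-q_\gamma$ for time $t=\frac12\log(\Ext_{X_0}(\gamma)/\epsilon_0)$. By \Cref{theorem:KerckhoffFormula}, $\Ext_{X_t}(\gamma)=e^{-2t}\Ext_{X_0}(\gamma)\le\epsilon_0$.
\item Each component $c$ of $\gamma$ satisfies $\Ext_{X_t}(c)\le\Ext_{X_t}(\gamma)$, by monotonicity in the definition via $\sup_\rho$, since $\ell_\rho(c)\le\ell_\rho(\gamma)$.
\end{enumerate}
Thus every curve of $\nu\cup\{a,b\}$ has extremal length at most $\epsilon_0$, so $X_t\in\Thin_\alpha\cap\Thin_\beta$.

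Alternatively, one could pinch directly with Fenchel--Nielsen coordinates on a pants decomposition containing $\gamma$, using Maskit's inequality (\Cref{theorem:Maskit'sLemma}) to convert small hyperbolic lengths into small extremal lengths.

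There is no real obstacle. The only point needing care is that the hypothesis $k\le 3g-4+n$ is exactly what guarantees that $\alpha\cup\beta$ is a genuine multicurve, i.e.\ that adjacent vertices have disjoint exchanged curves. In the pants-graph case $k=3g-3+n$ the exchanged curves intersect, $\Thin_\alpha\cap\Thin_\beta$ is empty by Minsky's inequality (\Cref{lemma:MinskyInequality}), and a different argument would be needed there.
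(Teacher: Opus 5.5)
Your proposal is correct and follows the paper's argument. You reduce to adjacent vertices and route through a point $X\in\Thin_\alpha\cap\Thin_\beta$ to get $d_e(v_\alpha,v_\beta)\le 1$. The paper simply asserts that this intersection is nonempty. You justify it by pinching the $(k+1)$-multicurve $\alpha\cup\beta$ along a Teichmüller geodesic, as in the proof of \Cref{lemma:QuasiDensityOfI}, which is a worthwhile addition.
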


\begin{proof}
    Assume that $d_{\multicurve{k}}(\alpha,\beta) = 1$.
    In this case, the intersection $\Thin_\alpha \cap \Thin_\beta \subset \Teich(\Sigma)$ is nonempty.
    Choose $X \in \Thin_\alpha \cap \Thin_\beta$.
    Then
    \begin{equation*}
        d_e(v_\alpha, v_\beta)
        \le d_e(v_\alpha, X) + d_e(X, v_\beta)
        = 1.
    \end{equation*}
    This proves the desired inequality.
\end{proof}

\begin{lemma}\label{lemma:LipschitzPropertyForTheMapI2}
    Let $k=3g-3+n$.
    Then there exists a constant $K_0>0$ such that, for any $\alpha,\beta\in\multicurve{k}(\Sigma)$, we have
    \begin{equation*}
        d_e(v_\alpha,v_\beta)\leq K_0 d_{\multicurve{k}}(\alpha,\beta).
    \end{equation*}
\end{lemma}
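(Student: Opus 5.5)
By the triangle inequality it suffices to treat adjacent vertices, i.e.\ to find a constant $K_0$, depending only on $\Sigma$ and $\epsilon_0$, such that $d_e(v_\alpha,v_\beta)\le K_0$ whenever $\alpha,\beta$ are adjacent pants decompositions. Unlike in \cref{lemma:LipschitzPropertyForTheMapI1}, $\Thin_\alpha\cap\Thin_\beta$ is now empty. The curves $\alpha\setminus\nu$ and $\beta\setminus\nu$ intersect, so by \cref{lemma:MinskyInequality} they cannot both have extremal length at most $\epsilon_0$, since $\epsilon_0$ is small. Instead, I will exhibit points $X\in\Thin_\alpha$ and $Y\in\Thin_\beta$ with $d_{\Teich}(X,Y)$ uniformly bounded, and then use
\[
d_e(v_\alpha,v_\beta)\le \tfrac12+d_{\Teich}(X,Y)+\tfrac12 .
\]

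The bound on $d_{\Teich}(X,Y)$ will come from the mapping class group. Write $\alpha=\nu\cup\{a\}$ and $\beta=\nu\cup\{b\}$, where $i(a,b)\in\{1,2\}$ and $a,b$ fill the component $S$ of $\Sigma\setminus\nu$, a once-holed torus or a four-holed sphere. Up to the action of $\Mod(\Sigma)$ there are only finitely many such configurations $(\alpha,\beta)$. The pants decomposition together with the type of each complementary pair of pants has finitely many combinatorial types, and for fixed $\alpha$ the curves $b$ with the prescribed intersection number with $a$ in $S$ form a single orbit under twisting along $a$, whose support is disjoint from $\nu$ and preserves $\alpha$.

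For each of the finitely many representatives $(\alpha_j,\beta_j)$, pick $X_j\in\Thin_{\alpha_j}$ and $Y_j\in\Thin_{\beta_j}$; these sets are nonempty by the argument of \cref{lemma:QuasiDensityOfI}. Set $K_0=1+\max_j d_{\Teich}(X_j,Y_j)$. For a general adjacent pair, write $(\alpha,\beta)=\phi(\alpha_j,\beta_j)$ with $\phi\in\Mod(\Sigma)$. The mapping class group acts by isometries on $\Teich(\Sigma)$, and it preserves extremal length in the sense that $\Ext_{\phi X}(\phi\gamma)=\Ext_X(\gamma)$. Hence $\phi X_j\in\Thin_\alpha$ and $\phi Y_j\in\Thin_\beta$, and
\[
d_e(v_\alpha,v_\beta)\le 1+d_{\Teich}(\phi X_j,\phi Y_j)\le K_0 .
\]

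The main obstacle is the finiteness claim, which needs care: the orbit of adjacent pairs must be classified correctly. The key point is that $\Mod(\Sigma)$ acts with finitely many orbits on pairs (pants decomposition, curve in it), since the cut surface has finitely many topological types. Once $\alpha$ and $a$ are fixed, the curves $b$ in $S$ with $i(a,b)$ minimal are permuted transitively by the Dehn twist along $a$, or by the half-twist along $a$ in the four-holed sphere case. Note that the half-twist, unlike the full twist, is not an element of $\Mod(\Sigma)$ in general, because the four-holed sphere $S$ may have boundary components that are identified in $\Sigma$. In that case I would accept finitely many orbits rather than one, which suffices. If this classification feels too delicate, an alternative is a direct construction: take a hyperbolic surface where all curves of $\nu$ have length at most some small $\epsilon$ and $a,b$ have bounded length in $S$, then pinch $a$ or $b$ along Teichmüller geodesics as in \cref{lemma:QuasiDensityOfI}, using \cref{theorem:KerckhoffFormula} to control distances. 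Both steps require uniform bounds, which again rest on the same compactness.
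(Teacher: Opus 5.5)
Your argument is correct, but it takes a different route from the paper.

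\textbf{The paper's route.} The paper treats an adjacent pair directly. It asserts that there is a surface $X$ on which every curve of $\alpha$ and of $\beta$ has hyperbolic length at most a universal constant $D$, which gives $\Ext_X(\alpha),\Ext_X(\beta)\le D'$. It then follows the two Teichmüller geodesics from $X$ in the directions $-q_\alpha$ and $-q_\beta$ for time $\frac12\log(D'/\epsilon_0)$. Kerckhoff's formula places the endpoints in $\Thin_\alpha$ and $\Thin_\beta$, which yields the explicit bound $d_e(v_\alpha,v_\beta)\le\log(D'/\epsilon_0)+1$. This is essentially the ``alternative'' you sketch at the end.

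\textbf{Your route.} Your main argument uses only the $\Mod(\Sigma)$-equivariance $\phi(\Thin_\gamma)=\Thin_{\phi\gamma}$, together with the fact that adjacent pairs of pants decompositions fall into finitely many $\Mod(\Sigma)$-orbits. It therefore needs no geodesic or extremal-length estimates, at the cost of a non-explicit $K_0$.

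\textbf{What each buys.} The paper states the existence of the universal constant $D$ without proof. It is most naturally justified by exactly the orbit-finiteness you check, or by an explicit Fenchel--Nielsen choice. So your write-up makes explicit a step the paper leaves implicit. The paper's version, in turn, shows that $K_0$ depends only logarithmically on $1/\epsilon_0$.

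\textbf{Small corrections to your side remark.} In the four-holed sphere case, the curves $b\subset S$ with $i(a,b)=2$ form exactly two $\langle T_a\rangle$-orbits, since $T_a$ acts on them as a shift by two. The half-twist is generally unavailable because it permutes two boundary components of $S$, so it does not extend by the identity outside $S$. The cause is not that boundary components are identified in $\Sigma$. Neither point affects your conclusion.
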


\begin{proof}
    Let $\alpha = \{\alpha^1,\ldots,\alpha^k\}$ and
    $\beta = \{\beta^1,\ldots,\beta^k\}$ be elements of
    $\multicurve{k}(\Sigma)$ with
    $d_{\multicurve{k}}(\alpha,\beta) = 1$.
    Then, after relabeling if necessary, we may assume that
    $\alpha^i = \beta^i$ for $2 \le i \le k$, and that
    \begin{itemize}
        \item $i(\alpha^1,\beta^1) = 1$ if one of the connected components of
        $\Sigma \setminus \bigcup_{i=2}^k \alpha^i$ is a once-holed torus;
        \item $i(\alpha^1,\beta^1) = 2$ if one of the connected components of
        $\Sigma \setminus \bigcup_{i=2}^k \alpha^i$ is a four-holed sphere.
    \end{itemize}
    In either case, there exists $X\in \Teich(\Sigma)$ such that 
    \begin{equation*}
        \ell_X(\alpha^1)\leq D, \ell_X(\beta^1)\leq D,\ \text{and}\ \ell_X(\alpha^i)=\ell_X(\beta^i)\leq D\ (2\leq i\leq k),
    \end{equation*}
    where $D>0$ is a universal constant.
    Therefore there exists a universal constant $D'>0$ with $\Ext_X(\alpha)\leq D' \text{ and } \Ext_X(\beta)\leq D'$.
    Let $q_\alpha,q_\beta$ be holomorphic quadratic differentials on $X$ whose horizontal measured foliations equal $\alpha, \beta$, respectively.
    Let $\{X_t\}, \{Y_t\}$ be the Teichmüller geodesics starting from $X$, where $X_t=X_{-q_\alpha,e^{2t}} $ and $ Y_t = X_{-q_\beta,e^{2t}}$.
    Setting $t=\frac12\log\frac{D'}{\epsilon_0}$, by \cref{theorem:KerckhoffFormula}, we have 
    \begin{equation*}
        \Ext_{X_t}(\alpha)\leq \epsilon_0 \text{ and } \Ext_{Y_t}(\beta)\leq \epsilon_0.
    \end{equation*}
    Thus we see that $X_t\in \Thin_\alpha$ and $Y_t\in\Thin_\beta$, and hence
    \begin{align*}
        d_e(v_\alpha,v_\beta) &\leq d_e(X_t,Y_t)+1 \\
        &\leq d_{\Teich}(X_t,Y_t)+1 \\
        &\leq d_{\Teich}(X_t,X)+d_{\Teich}(X,Y_t)+1 \\
        &\leq \log\frac{D'}{\epsilon_0}+1.
    \end{align*}
    This yields the desired conclusion.
\end{proof}

\begin{lemma}\label{lemma:QuasiDenistiyOfQuasiInverse}
    For each $X\in\Teich(\Sigma)$, let $\alpha_X$ and $\alpha'_X$ be $k$-multicurves with $\Ext_X(\alpha_X)\leq \delta_0$ and $\Ext_X(\alpha_X')\leq \delta_0$.
    Then, the distance between $\alpha_X$ and $\alpha'_X$ in $\multicurve{k}(\Sigma)$ is at most $\delta_1$, where $\delta_1$ depends only on $\delta_0$, $g$ and $n$.
\end{lemma}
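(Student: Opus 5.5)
The plan is to bound the intersection number $i(\alpha_X,\alpha'_X)$ uniformly using Minsky's inequality, and then convert that bound into a distance bound using the intersection-to-distance estimates of the previous section.

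\textbf{Step 1: bounding the intersection number.} By \Cref{lemma:MinskyInequality}, applied to the measured foliations $\alpha_X$ and $\alpha'_X$ (with unit weights on their components) on the Riemann surface $X$, we get
\begin{equation*}
    i(\alpha_X,\alpha'_X)^2\leq \Ext_X(\alpha_X)\Ext_X(\alpha'_X)\leq \delta_0^2 .
\end{equation*}
Hence $i(\alpha_X,\alpha'_X)\leq \delta_0$. Here the intersection number of multicurves is the sum of the pairwise intersection numbers of their components, which agrees with the intersection pairing of the associated measured foliations.

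\textbf{Step 2, case $1\le k\le \xi(\Sigma)-1$.} Apply \Cref{corollary:UpperBoundForTheK-multicurveDistance} to obtain
\begin{equation*}
    d_{\multicurve{k}}(\alpha_X,\alpha'_X)\leq 6\cdot 4^{6g-6+2n-2k}\delta_0^2+f(k).
\end{equation*}
The right-hand side depends only on $\delta_0$, $g$, $n$ and $k$. Since $k$ ranges over finitely many values determined by $g,n$, we may take the maximum over $k$. This gives a constant $\delta_1=\delta_1(\delta_0,g,n)$.

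\textbf{Step 2, case $k=\xi(\Sigma)$.} Here $\alpha_X$ and $\alpha'_X$ are pants decompositions, and the graph is the pants graph. Apply \Cref{theorem:QuadraticBound} directly:
\begin{equation*}
    d_{\mathcal{P}}(\alpha_X,\alpha'_X)\leq 6\, i(\alpha_X,\alpha'_X)^2\leq 6\delta_0^2 .
\end{equation*}
We then set $\delta_1$ to be the maximum of $6\delta_0^2$ and the constants from the previous case.

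\textbf{Main obstacle.} There is essentially none; the only point to check is the applicability of Minsky's inequality to multicurves. Extremal length of a multicurve, as defined in the preliminaries, coincides with the extremal length of the corresponding measured foliation. This holds by continuity of $\Ext$ on $\mathcal{MF}(\Sigma)$ and the identification of a multicurve with its weighted foliation. With that identification, Step 1 applies verbatim.
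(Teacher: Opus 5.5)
Your proof is correct and follows the paper's argument: Minsky's inequality gives $i(\alpha_X,\alpha'_X)\le\delta_0$, and \Cref{corollary:UpperBoundForTheK-multicurveDistance} turns this into a uniform distance bound. Your separate treatment of $k=\xi(\Sigma)$ via \Cref{theorem:QuadraticBound} is slightly more careful than the paper. The corollary is only stated for $k\le\xi(\Sigma)-1$, and the paper cites it without comment on the pants-graph case. Its formula does reduce to $6\,i^2$ there, since $f(\xi(\Sigma))=0$ and the power of $4$ equals $1$.
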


\begin{proof}
    By \cref{lemma:MinskyInequality}, we obtain 
    \begin{equation*}
        i(\alpha_X,\alpha'_X)^2\leq\Ext_X(\alpha_X)\Ext_X(\alpha'_X) \leq \delta_0^2. 
    \end{equation*}
    From this and \cref{corollary:UpperBoundForTheK-multicurveDistance}, we obtain 
    \begin{equation*}
        d_{\multicurve{k}}(\alpha_X,\alpha'_X)\leq K_1i(\alpha_X,\alpha'_X)^2+K_2\leq K_1\delta_0^2+K_2\eqqcolon \delta_1.
    \end{equation*} 
\end{proof}

By \Cref{corollary:UniformBoundForExtremalLength}, for each
$X \in \Teich(\Sigma)$, there exists a $k$-multicurve
$\alpha_X$ such that $\Ext_X(\alpha_X) \le \delta_0$.
We define a map
\begin{equation*}
    \Phi \colon \elTeich^k(\Sigma) \to \multicurve{k}(\Sigma)
\end{equation*}
by setting $\Phi(X) = \alpha_X$ for $X \in \Teich(\Sigma)$ and
$\Phi(v_\alpha) = \alpha$ for the vertex $v_\alpha$ corresponding to
$\Thin_\alpha$ that is added in the electrification.
Although the choice of $\alpha_X$ is not unique, it follows from
\Cref{corollary:UniformBoundForExtremalLength} that any two such choices
differ by a distance that is uniformly bounded.

\begin{proof}[Proof of \Cref{theorem:K-ElectricTeichmullerSpaceAndK-multicurveGraphAreQI}]
    The quasi-density of the map $I$ follows from \cref{lemma:QuasiDensityOfI}.
    We check that there exist constants $A\geq 1$ and $B\geq 0$ such that, for each $\alpha,\beta\in\multicurve{k}(\Sigma)$, 
    \begin{equation}\label{eq:QuasiIsometryInequalityOfI}
        \frac1A d_e(I(\alpha),I(\beta)) -B \leq d_{\multicurve{k}}(\alpha,\beta)\leq A d_e(I(\alpha),I(\beta)) +B.
    \end{equation}
    The right-hand inequality of \cref{eq:QuasiIsometryInequalityOfI} follows from \cref{lemma:LipschitzPropertyForTheMapI1,lemma:LipschitzPropertyForTheMapI2}.

    We verify the left-hand inequality of \cref{eq:QuasiIsometryInequalityOfI} below.
    Fix $X,Y\in \Teich(\Sigma)$ with $d_{\Teich}(X,Y)\leq 1$.
    By \cref{theorem:KerckhoffFormula}, we have 
    \begin{equation*}
        \frac{\Ext_X(\Phi(Y))}{\Ext_Y(\Phi(Y))}\leq e^2.
    \end{equation*}
    Therefore we see that $\Ext_X(\Phi(Y))\leq e^2\delta_0$, and hence, by \cref{lemma:MinskyInequality},
    \begin{equation*}
        i(\Phi(X),\Phi(Y))^2\leq \Ext_X(\Phi(X))\Ext_X(\Phi(Y))\leq e^2\delta_0^2.
    \end{equation*}
    Thus we obtain 
    \begin{equation*}
        d_{\multicurve{k}}(\Phi(X),\Phi(Y))\leq K_1i(\Phi(X),\Phi(Y))^2+K_2\leq K_1e^2\delta_0^2+K_2\eqqcolon K_3.
    \end{equation*}
    For any $X,Y\in \Teich(\Sigma)$, we set $n=\lfloor d_{\Teich}(X,Y)\rfloor$.
    Choose a sequence $\{X_i\}_{i=0}^{n+1}\subset \Teich(\Sigma)$ with $X_0=X, X_{n+1}=Y$ and $d_{\Teich}(X_i,X_{i+1})\leq 1$ for each $i=0,\ldots,n$.
    Then 
    \begin{equation*}
        d_{\multicurve{k}}(\Phi(X),\Phi(Y))\leq \sum_{i=0}^n d_{\multicurve{k}}(\Phi(X_i),\Phi(X_{i+1}))\leq K_3 (n+1).
    \end{equation*}
    This shows that
    \begin{equation}\label{eq:BoundOnGraphDistanceByTeichmullerDistance}
        d_{\multicurve{k}}(\Phi(X),\Phi(Y))\leq K_3 (d_{\Teich}(X,Y)+2).
    \end{equation}

    For any $\alpha, \beta \in \multicurve{k}(\Sigma)$, we choose a path $c \subset \elTeich^k(\Sigma)$ from $v_\alpha$ to $v_\beta$ such that $\ell_{\elTeich^k}(c) \le d_e(v_\alpha, v_\beta) + 1$.
    Let
    \begin{equation*}
    v_\alpha = v_{\alpha_0}, v_{\alpha_1}, \ldots, v_{\alpha_m} = v_\beta
    \qquad
    (\alpha_i \in \multicurve{k}(\Sigma),\ 0 \le i \le m)
    \end{equation*}
    denote the added vertices of $\elTeich^k(\Sigma)$ that the path $c$ passes through,
    listed in the order in which they are encountered along $c$.
    Then, the number $m$ of the vertices is at most $d_e(v_\alpha,v_\beta)+1$.
    For each $i$ with $1 \le i \le m$, let $c_i$ denote the subpath of $c$ from $v_{\alpha_{i-1}}$ to $v_{\alpha_i}$.
    Let $X_i \in \Teich(\Sigma)$ be a point on $c_i$ closest to $v_{\alpha_{i-1}}$, and let $X_i' \in \Teich(\Sigma)$ be a point on $c_i$ that is closest to $v_{\alpha_i}$ (see \cref{figure:SettingX}).
    \begin{figure}
       \centering
       \begin{overpic}[]{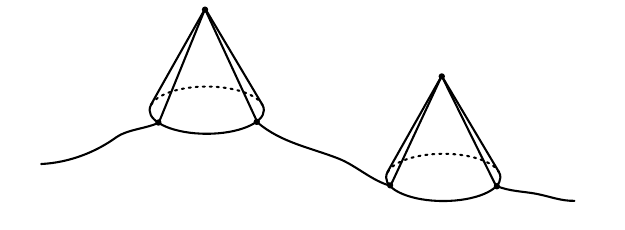}
           \put(31,37){$v_{\alpha_{i-1}}$}
           \put(69,26){$v_{\alpha_i}$}
           \put(50,9){$c_i$}
           \put(39,12){$X_i$}
           \put(60,2){$X_i'$}
           \put(77,2){$X_{i+1}$}
           \put(22,12){$X_{i-1}'$}
           \put(11,15){$c_{i-1}$}
           \put(88,6){$c_{i+1}$}
       \end{overpic}
       \caption{}
       \label{figure:SettingX}
    \end{figure}
    Then, $X_i\in \Thin_{\alpha_{i-1}}$ and $X_i'\in \Thin_{\alpha_i}$.
    In addition, we have
    \begin{equation}\label{eq:PathC_i}
        \ell_{\elTeich^k}(c_i)\geq d_{\Teich}(X_i,X_i')+1.
    \end{equation}
    Therefore, by \cref{lemma:QuasiDenistiyOfQuasiInverse} and inequalities \cref{eq:BoundOnGraphDistanceByTeichmullerDistance}, \cref{eq:PathC_i}, we have 
    \begin{align*}
        &\quad d_{\multicurve{k}}(\alpha_{i-1},\alpha_i) \\
        &\leq d_{\multicurve{k}}(\alpha_{i-1},\Phi(X_i))+d_{\multicurve{k}}(\Phi(X_i),\Phi(X_i'))+d_{\multicurve{k}}(\Phi(X'_i),\alpha_i) \\
        & \leq d_{\multicurve{k}}(\Phi(X_i),\Phi(X_i'))+2\delta_1 \\
        &\leq K_3(d_{\Teich}(X_i,X_i')+2)+2\delta_1 \\
        &\leq K_3(\ell_{\elTeich^k}(c_i)+1)+2\delta_1.
    \end{align*}
    Moreover, we have 
    \begin{align*}
        d_{\multicurve{k}}(\alpha,\beta) &\leq \sum_{i=1}^m d_{\multicurve{k}}(\alpha_{i-1},\alpha_i) \\
        &\leq K_4 m+K_3\sum_{i=1}^m \ell_{\elTeich^k}(c_i)\\
        &\leq (K_4+K_3)(d_e(v_\alpha,v_\beta)+1)
    \end{align*}
    This proves that the map $I$ is a quasi-isometry map.
\end{proof}

\appendix
\section{Quadratic upper bound for pants graph distance via intersection number}\label{section:Appendix}

\begin{theorem}[{\cite[Corollary 3.21]{lackenby2024bounds}}]\label{theorem:QuadraticUpperBoundsForPantsGraphDistance}
    Let $P,P'$ be pants decompositions of $\Sigma$.
    Then the distance $P$ and $P'$ in $\mathcal{P}(\Sigma)$ is $O(i(P,P')^2)$.
\end{theorem}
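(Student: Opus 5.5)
The plan is to follow the Lackenby--Yazdi strategy: induct on the intersection number, and at each step reduce $i(P,P')$ using a bounded number of elementary moves. Concretely, I aim to show the following. If $i(P,P')>0$, then there is a pants decomposition $P''$ with
\[
d_{\mathcal{P}}(P,P'')\le C\, i(P,P')
\qquad\text{and}\qquad
i(P'',P')\le i(P,P')-1 .
\]
Here $C$ is a universal constant; I expect $C=12$ will give the constant $6$ of \cref{theorem:QuadraticBound}. Once this is established, summing over the at most $i(P,P')$ steps gives
\[
d_{\mathcal{P}}(P,P')\le \sum_{j=1}^{i(P,P')} Cj=O(i(P,P')^2).
\]
The base case $i(P,P')=0$ is immediate: it forces $P=P'$, since both are maximal multicurves.

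\textbf{Reduction step.} Put $P$ and $P'$ in minimal position. Take a curve $\gamma'\in P'$ that meets $P$, and choose a pair of pants $R$ of $\Sigma\setminus P$ that $\gamma'$ crosses. Then $\gamma'\cap R$ is a collection of essential arcs in $R$. Let $\delta\in P$ be a boundary component of $R$ hit by one of these arcs, and let $S$ be the complexity-one subsurface obtained as the union of the pants on either side of $\delta$; it is a four-holed sphere or a one-holed torus. Inside $S$, consider the arcs of $P'\cap S$. Among the curves $\delta^\ast\subset S$ that are disjoint from $\partial S$ and are built from a single arc of $P'\cap S$ together with parts of $\partial S$ (as in case (iii) of \cref{lemma:Extension}), I pick one and replace $\delta$ by $\delta^\ast$. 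Counting intersections, $i(\delta^\ast,P')$ is at most the number of arcs of $P'\cap S$ minus the arc we followed, and this should be strictly less than $i(\delta,P')$.

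\textbf{Main obstacle.} The difficulty is twofold. First, the new curve $\delta^\ast$ need not be adjacent to $\delta$ in the pants graph of $S$. Second, the strict decrease of the intersection number is not automatic, because arcs parallel to $\partial S$ can make the surgered curve meet $P'$ as often as $\delta$ does.

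For the first issue, I would connect $\delta$ to $\delta^\ast$ inside the Farey graph of $S$. Their distance there is controlled by $i(\delta,\delta^\ast)$, which is at most $2\,i(\delta,P')$ and hence $O(i(P,P'))$; this accounts for the linear factor in $d_{\mathcal{P}}(P,P'')$.

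For the second issue, I would choose the arc more carefully: take an outermost arc, i.e. one cobounding a region with $\delta$ that contains no other arcs of $P'$. Surgering along an outermost arc removes at least the two intersection points at its endpoints. This is the step I expect to need genuine case analysis, namely the torus versus sphere cases and whether the endpoints of the arc lie on the same side of $\delta$.

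Granting these two points, the induction closes and yields the quadratic bound.
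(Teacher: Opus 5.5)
There is a genuine gap, and it is the step you defer to ``case analysis'': that some single-curve replacement $\delta\mapsto\delta^\ast$ inside the complexity-one subsurface $S=S_\delta$ strictly lowers $i(P,P')$. This step is not just unproved; it is false. The function $P\mapsto i(P,P')$ can have local minima for such moves at some $P\neq P'$.

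Here is an example. Let $\Sigma$ be closed of genus $2$. Let $P=\{a_1,a_2,a_3\}$ be three nonseparating curves cutting $\Sigma$ into two pairs of pants $Q^+,Q^-$. Let $b_{ij}$ be the closed curve formed by the seam from $a_i$ to $a_j$ in $Q^+$ together with the seam from $a_j$ to $a_i$ in $Q^-$. Then $P'=\{b_{12},b_{13},b_{23}\}$ is a pants decomposition with $i(a_k,P')=2$ for each $k$, so $i(P,P')=6$.

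In $S_{a_1}=Q^+\cup_{a_1}Q^-$, the curve $b_{23}$ appears as two arcs $x_\pm\subset Q^\pm$ joining $a_2^\pm$ to $a_3^\pm$. Moreover, $a_1$ is isotopic to the boundary of a regular neighborhood of $a_2^+\cup x_+\cup a_3^+$, and likewise with $-$. Hence every curve $\gamma\neq a_1$ in $S_{a_1}$ satisfies $i(\gamma,P')\ge i(\gamma,b_{23})\ge i(\gamma,a_1)\ge 2=i(a_1,P')$. By symmetry the same holds at $a_2$ and $a_3$. So no replacement of a single curve decreases $i(P,P')$, and your induction cannot get past this pair.

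The two mechanisms you propose also fail as stated.
\begin{enumerate}
\item The surgered curve built from one arc of $P'\cap S$ and the boundary components it joins meets every other arc of $P'\cap S$ ending on those components. This includes arcs that never meet $\delta$, such as $x_\pm$ above. So $i(\delta^\ast,P')$ is controlled by $i(\partial S,P')$, not by $i(\delta,P')$.
\item The outermost-arc surgery does not produce a replacement for $\delta$. A subarc of $P'$ that cobounds a region with $\delta$ and stays in $S$ lies in a single pair of pants of $S\setminus\delta$, and surgery along it only returns the other cuffs, which are already in $P$ or peripheral. A subarc that leaves $S$ gives a curve meeting $P\setminus\{\delta\}$. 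Hempel-type surgery works in the curve graph precisely because no such constraint is present.
\end{enumerate}

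The paper, following Lackenby--Yazdi, avoids needing any monotone potential. Superimposing $P$ and $P'$ gives a pre-triangulation with at most $2i(P,P')$ edges. Each ordering of its edges yields a pants decomposition, obtained from the boundaries of successive regular neighborhoods, and both $P$ and $P'$ arise from suitable orderings. An adjacent transposition changes the output by at most $3$ pants moves. At most $\binom{2i(P,P')}{2}$ adjacent transpositions connect the two orderings, which gives $d_{\mathcal{P}}(P,P')\le 6i(P,P')^2-3i(P,P')$. To rescue your induction, you would need a way to cross plateaus like the one above at controlled cost, and that is essentially what the reordering argument supplies.
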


In this section, following the proof of Lackenby and Yazdi, we obtain a more explicit upper bound on the pants graph distance in terms of the intersection number, without using big O notation.

We recall the material in Section 3 of the paper \cite{lackenby2024bounds}.
Let $\Sigma=\Sigma_{g,n}$ be a surface of genus $g$ with $n$ punctures.
A \textit{pre-triangulation} of $\Sigma$ is a finite 1-complex embedded into $\Sigma$ such that each connected component of its complement is topologically either a disk, an annulus or a pair of pants.
Here, we regard a simple closed curve with no vertices as a 1-complex.
Let $\mathcal{T}$ be a pre-triangulation of $\Sigma$.
We choose an ordering of the edges of $\mathcal{T}$, denoted by $\mathrm{O}=(e_1,\ldots,e_i)$.
We construct two sequences of subsurfaces $\{N_j\}_{j=1}^i$ and $\{F_j\}_{j=1}^i$  of $\Sigma$, associated with the pre-triangulation $\mathcal{T}$ and the ordering $\mathrm{O}$ of its edges.
First, we define $N_1\subset \Sigma$ to be a regular neighborhood of $e_1$.
Next, we inductively define $N_{j+1}\subset \Sigma$ to be a regular neighborhood of $N_j\cup e_{j+1}$.
Finally, capping off every disk component of $\Sigma\setminus N_j$, we obtain a subsurface $F_j$.
For each $j$, let $\gamma_j$ denote the multicurve (namely, a set of isotopy classes of essential simple closed curves) consisting of all boundary components of $F_j$ that are essential in $\Sigma$, where parallel simple closed curves are identified \cite[Construction 3.10]{lackenby2024bounds}.
Then, the union $\bigcup_{j=1}^i \gamma_j$ gives a pants decomposition of $\Sigma$ \cite[Lemma 3.13]{lackenby2024bounds}.
We denote the pants decomposition by $\mathbf{P}(\mathcal{T},\mathrm{O})$.

If $\mathrm{O}'$ is obtained from an ordering $\mathrm{O}$ of the edges of $\mathcal{T}$ by swapping two consecutive indices, then the distance between $\mathbf{P}(\mathcal{T},\mathrm{O})$ and $\mathbf{P}(\mathcal{T},\mathrm{O}')$ is bounded above by a uniform constant $C$ \cite[Lemma 3.18]{lackenby2024bounds}. 
Now let $\mathrm{O}$ and $\mathrm{O}'$ be arbitrary orderings of the edges of $\mathcal{T}$.
Since any permutation of $i$ elements can be written as a product of at most
$1+\cdots+(i-1)=i(i-1)/2$ adjacent transpositions, $\mathrm{O}$ can be transformed into $\mathrm{O}'$ by at most $i(i-1)/2$ adjacent transpositions, where $i$ is the number of edges of $\mathcal{T}$. 
Therefore, the distance between $\mathbf{P}(\mathcal{T},\mathrm{O})$ and $\mathbf{P}(\mathcal{T},\mathrm{O}')$ is at most $Ci(i-1)/2$
\cite[Corollary 3.19]{lackenby2024bounds}.

Let $P$ and $P'$ be pants decompositions of $\Sigma$, and assume that they are in minimal position.
If $P$ and $P'$ share some components, then by cutting $\Sigma$ along the common curves and restricting to the resulting subsurfaces, we may assume without loss of generality that $P$ and $P'$ have no common component.
Let $\mathcal{T}_{P\cup P'}$ be the pre-triangulation obtained by superimposing $P$ and $P'$.
Then the number of edges of $\mathcal{T}_{P\cup P'}$ is at most $2\,i(P,P')$.
Moreover, there exist orderings $\mathrm{O}$ and $\mathrm{O}'$ of the edges of $\mathcal{T}_{P\cup P'}$ such that $\mathbf{P}(\mathcal{T}_{P\cup P'},\mathrm{O})=P$ and $\mathbf{P}(\mathcal{T}_{P\cup P'},\mathrm{O}')=P'$.
Therefore,
\begin{equation}\label{equation:QuadraticUpperBoundsForPantsGraphDistance}
    \begin{split}
        d_{\mathcal{P}}(P,P') 
        &\leq C\cdot \frac{2\,i(P,P')(2\,i(P,P')-1)}{2} \\
        &= C\bigl(2\,i(P,P')^{2}-i(P,P')\bigr).
    \end{split}
\end{equation}

Thus, finding the constant $C$ explicitly, we obtain an inequality between the pants graph distance and the intersection number.
By the following proposition, we see that the best possible constant $C$ is $3$.

\begin{proposition}
    Let $\mathcal{T}$ be a pre-triangulation of $\Sigma$.
    Let $\mathrm{O}$ and $\mathrm{O}'$ be orderings of edges of $\mathcal{T}$.
    Suppose that $\mathrm{O}'$ is obtained by swapping two consecutive indices of $\mathrm{O}$.
    Then, 
    \begin{equation*}
        d_{\mathcal{P}}(\mathbf{P}(\mathcal{T},\mathrm{O}),\mathbf{P}(\mathcal{T},\mathrm{O}'))\leq 3.
    \end{equation*}
\end{proposition}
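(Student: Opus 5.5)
The plan is to compare the two filtrations $\{F_j\}$ built from $\mathrm{O}$ and from $\mathrm{O}'$. Say $\mathrm{O}'$ swaps $e_{s}$ and $e_{s+1}$. For $j\neq s$ the subsurfaces $N_j$ and $N'_j$ coincide up to isotopy: for $j<s$ the same edges have been added, and for $j\geq s+1$ the same set of edges has been added, so the regular neighborhoods agree. Hence $F_j=F'_j$ and $\gamma_j=\gamma'_j$ for all $j\neq s$. Since $\mathbf{P}(\mathcal{T},\mathrm{O})=\bigcup_j\gamma_j$ and $\mathbf{P}(\mathcal{T},\mathrm{O}')=\bigcup_j\gamma'_j$, the two pants decompositions share the multicurve $\mu=\bigcup_{j\neq s}\gamma_j$. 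They can differ only in the curves of $\gamma_s$ and $\gamma'_s$ not already in $\mu$.

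Next I localize. Both $\gamma_s$ and $\gamma'_s$ lie in the region $F_{s+1}\setminus F_{s-1}$, up to isotopy. More precisely, $F_s$ and $F'_s$ are each sandwiched between $F_{s-1}$ and $F_{s+1}$. So the curves of $\gamma_s\setminus\mu$ and $\gamma'_s\setminus\mu$ lie in the components of $\Sigma\setminus\mu$ that meet $F_{s+1}\setminus \mathrm{int} F_{s-1}$. Passing from $F_{s-1}$ to $F_{s+1}$ means attaching two edges, i.e.\ two bands, or one band plus capping disks. Each band changes the Euler characteristic by $-1$. So the complexity of the region where the two decompositions disagree is small.

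The key count is that $\Sigma\setminus\mu$ has a complementary piece $W$ with $\xi(W)\le 3$, and both decompositions restrict to pants decompositions of $W$. I would argue as follows. Adding one edge to $N_{s-1}$ either joins two components, cuts off a new boundary curve, or creates a new handle or boundary. Each such operation produces a piece whose complexity on the new side is at most one. Doing this twice, in either order, gives a region $W$ that is a union of at most two elementary cobordisms. This makes $W$ a sphere with at most five holes or a torus with at most two holes, etc., so $\xi(W)\le 2$ after the parallel curves are identified. Then $\mathbf{P}(\mathcal{T},\mathrm{O})$ and $\mathbf{P}(\mathcal{T},\mathrm{O}')$ agree outside $W$. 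Their distance in $\mathcal{P}(\Sigma)$ is at most the distance in $\mathcal{P}(W)$ between the restrictions, which are pants decompositions of a surface of complexity at most $2$.

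The remaining step, which I expect to be the main obstacle, is a case analysis on how $e_s$ and $e_{s+1}$ attach. The cases are: both endpoints on the same component of $\partial N_{s-1}$ or on different ones, whether the edges share endpoints, and whether disk capping happens. In each case I would write down $\gamma_s$ and $\gamma'_s$ explicitly and exhibit a path of at most $3$ elementary moves in $\mathcal{P}(W)$. The crude bound from complexity alone is not enough, since pants graphs of complexity-$2$ surfaces have infinite diameter. So I must use that each curve of $\gamma_s$ meets each curve of $\gamma'_s$ in at most a small number of points, at most $2$ or $4$, because both are boundaries of neighborhoods of $N_{s-1}$ plus one edge. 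With such small intersection, a direct check shows that in the worst case, two curves changing in a complexity-$2$ region, one needs an intermediate curve disjoint from both, giving $3$ moves. To show that $3$ is sharp, I would exhibit an example where $\gamma_s$ and $\gamma'_s$ each contribute two new curves in a $5$-holed sphere with pairwise intersections forcing distance $3$.
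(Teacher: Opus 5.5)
\textbf{Gap: the case analysis that yields the constant $3$ is announced, not carried out, and the cases you would check are listed wrongly.}

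Your reduction is the paper's. You show $F_k=F'_k$ for $k\neq s$, so both decompositions contain $\mu$ and differ only inside one component $S$ of $F_{s+1}\setminus F_{s-1}$. Then one analyzes how $e_s,e_{s+1}$ attach to $N_{s-1}$. Everything after that reduction is only stated ("a direct check shows\ldots"), and the localization you feed into it is inaccurate.

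\textbf{The region.} Two bands each lower $\chi$ by one, and capping disks only raise it. Hence $\chi(F_{s+1}\setminus F_{s-1})\ge -2$, which is Lackenby--Yazdi, Lemma 3.12. After discarding disks and annuli, the region is therefore one of:
\begin{itemize}
\item at most two pairs of pants, in which case $P=P'$;
\item a four-holed sphere;
\item a once-holed torus;
\item a twice-holed torus.
\end{itemize}
A five-holed sphere ($\chi=-3$) never occurs, and $\xi(W)\le 3$ is not the right count. In particular, your proposed sharpness example in $\Sigma_{0,5}$ cannot come from this construction. Nor is there a general principle that small intersection in a complexity-$2$ surface forces pants distance at most $3$ which could replace the case check. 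The bound depends on the specific configuration produced by $\partial N_s$ versus $\partial N'_s$.

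\textbf{What must actually be proved.}
\begin{itemize}
\item When $S$ is a four-holed sphere, the single differing curves (coming from $\partial N_s$ and $\partial N'_s$) meet at most $4$ times, so they are at Farey distance at most $2$.
\item When $S$ is a once-holed torus, they meet once, so the distance is $1$.
\item The value $3$ arises only from the twice-holed torus. This happens when both edges attach to the same boundary component of $N_{s-1}$ with interleaved endpoints. The new curves of $\gamma_s$ and $\gamma'_s$ are then pants decompositions $\{c_1,c_2\}$ and $\{c'_1,c'_2\}$ of $S\cong\Sigma_{1,2}$ by non-separating curves with $i(c_a,c'_b)\le 1$. You must exhibit an explicit path of length $3$. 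For instance, when $i(c_2,c'_1)=1$, let $\sigma=\partial N(c_2\cup c'_1)$, a separating curve. Then $\{c_1,c_2\}\to\{\sigma,c_2\}\to\{\sigma,c'_1\}\to\{c'_2,c'_1\}$ works, each step being a Farey move in a four-holed sphere because $\sigma$ meets $c_1$ and $c'_2$ exactly twice.
\end{itemize}

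Until you derive these intersection bounds from the geometry of $\partial N_s$ and $\partial N'_s$ and check these distances, the proposal proves only the localization, not the inequality.
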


\begin{proof}
    Following the argument in the proof of \cite[Lemma 3.18]{lackenby2024bounds}, we verify this proposition.
    Let $\mathrm{O}$ be an ordering of edges of $\mathcal{T}$ and let $\mathrm{O}'$ be the ordering obtained by swapping $j$-th and $(j+1)$-th edges.
    We write
    \begin{equation*}
        \mathrm{O}=(e_1,\ldots,e_j,e_{j+1},\ldots,e_i) \text{ and } 
        \mathrm{O}'=(e_1,\ldots,e_{j+1},e_{j},\ldots,e_i).
    \end{equation*}
    Let $\{N_k\}_{k=1}^i$ and $\{F_k\}_{k=1}^i$ be the sequences of subsurfaces associated with $\mathcal{T}$ and $\mathrm{O}$, and let $\{N'_k\}_{k=1}^i$ and $\{F'_k\}_{k=1}^i$ be those associated with $\mathcal{T}$ and $\mathrm{O}'$.
    For simplicity, we denote $\mathbf{P}(\mathcal{T},\mathrm{O})$ and $\mathbf{P}(\mathcal{T},\mathrm{O}')$ by $P$ and $P'$, respectively.
    By the construction of subsurfaces, we find that $F_k=F_k'$ if $|k-j|\geq 1$.
    Therefore, every curve in the symmetric difference $P \triangle P'$ is contained in the subsurface
    \begin{equation*}
        F_{j+1}\setminus F_{j-1}=F'_{j+1}\setminus F'_{j-1}.
    \end{equation*}

    Since $\chi(F_{j+1}\setminus F_{j-1})\geq -2$ \cite[Lemma 3.12]{lackenby2024bounds}, the number of connected components of $F_{j+1}\setminus F_{j-1}$ that are neither a disk, an annulus, or a pair of pants is at most one.
    After discarding disks and annuli in connected components of $F_{j+1}\setminus F_{j-1}$, the subsurface $F_{j+1}\setminus F_{j-1}$ is either
    \begin{enumerate}
        \item an essential four-holed sphere, an essential once-holed torus, an essential twice-holed torus or
        \item a disjoint union of at most two essential pairs of pants.
    \end{enumerate}
    If $F_{j+1}\setminus F_{j-1}$ is of type (2), the pants decompositions $P$ and $P'$ coincide.
    Thus, we may assume that $F_{j+1}\setminus F_{j-1}$ is of type (1).
    Let $N$ denote the subsurface $N_{j-1}=N'_{j-1}$.
    If at least one of $e_j$ and $e_{j+1}$ is a simple closed curve, the subsurface $F_{j+1}\setminus F_{j-1}$ is of type (2). 
    If at least one endpoint of $e_j$ or $e_{j+1}$ is disjoint from $N$, then $\partial N_j$ and $\partial N'_j$ can be isotoped to be disjoint.
    Therefore, we may assume that every endpoint of $e_j$ and $e_{j+1}$ lies in $N$.
    In this case, the ways in which $e_j$ and $e_{j+1}$ attach to $N$ are shown in \Cref{figure:HowToAttach}.
    \begin{figure}
       \centering
       \begin{overpic}[]{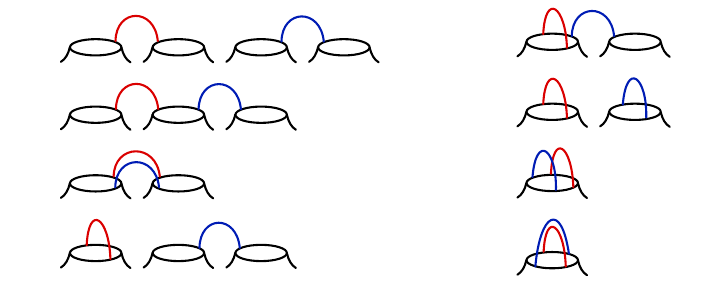}
           \put(2,34){(a)}
           \put(2,24.5){(b)}
           \put(2,15){(c)}
           \put(2,5){(d)}
           \put(66.5,34.5){(e)}
           \put(66.5,25){(f)}
           \put(66.5,15){(g)}
           \put(66.5,4.5){(h)}
       \end{overpic}
       \caption{The red and blue arcs denote $e_j$ and $e_{j+1}$ connecting boundary components of $N$.}
       \label{figure:HowToAttach}
    \end{figure}

    In the cases (a), (d) and (f) in \cref{figure:HowToAttach}, the subsurface $F_{j+1}\setminus F_{j-1}$ is a disjoint union of at most two essential pairs of pants, after discarding disk and annulus components of $F_{j+1}\setminus F_{j-1}$. 
    Then, there is no difference between $P$ and $P'$.

    In the cases (b), (c), (e) and (g) in \cref{figure:HowToAttach}, one of the connected components of the subsurface $F_{j+1}\setminus F_{j-1}$ can be a four-holed sphere $S$.
    Then the curves $\partial N_j$ and $\partial N'_j$ intersect at most four times
    (see \cref{figure:IntersectingCurve}, note that now the endpoints of $e_j$ and $e_{j+1}$ may lie in a different component of $\partial N$, and the two bands are not linked).
    In $S$, the unique curve of $P$ intersects the unique curve of $P'$ at most four times.
    Since $P$ and $P'$ coincide outside the four-holed sphere $S$, the distance $d_{\mathcal{P}}(P,P')$ is at most the maximum distance $d_1$ in $\mathcal{C}(\Sigma_{0,4})$ between two curves intersecting at most four times.
    If two curves on a four-holed sphere intersect at most four times, then, up to homeomorphism, they are one of two types shown in \cref{figure:4TimesHoledSphere}.
    Therefore, the maximum distance $d_1$ is at most $2$, and hence, so is $d_{\mathcal{P}}(P,P')$.
    \begin{figure}
       \centering
       \begin{overpic}[scale=0.8]{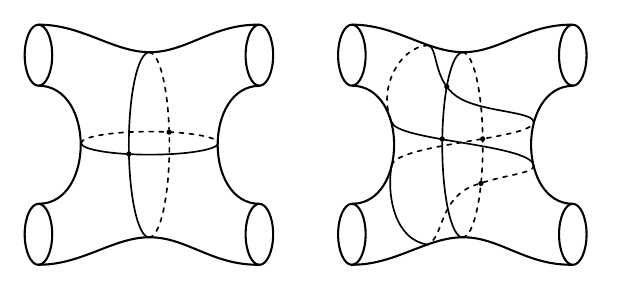}
       \end{overpic}
       \caption{}
       \label{figure:4TimesHoledSphere}
    \end{figure}

    We consider the case (h).
    In this case, one of the connected components of $F_{j+1}\setminus F_{j-1}$ is a once-holed torus or a twice-holed torus.
    We denote the connected component by $S$.
    Then $P$ and $P'$ coincide outside $S$, and they differ only by curves contained in $S$.
    Each of $\partial N_j\cap S$ and $\partial N'_j\cap S$ consists of two curves (see \cref{figure:IntersectingCurve}).
    \begin{figure}
       \centering
       \begin{overpic}[scale=0.9]{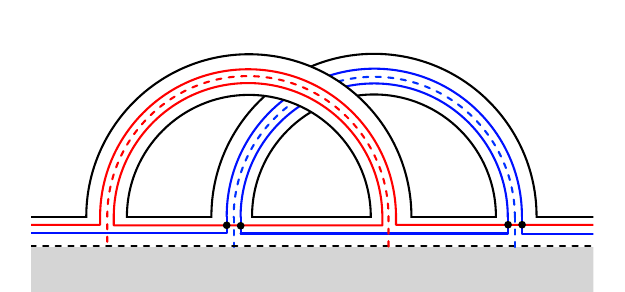}
           \put(6,2){$N$}
           \put(-3,10){\textcolor{red}{$\partial N_j$}}
           \put(96,7){\textcolor{blue}{$\partial N'_j$}}
           \put(82,27){$\partial N_{j+1}$}
           \put(15,4){\textcolor{red}{$e_j$}}
           \put(36,4){\textcolor{blue}{$e_{j+1}$}}
       \end{overpic}
       \caption{The white region (bounded by $\partial N_{j+1}$ and the black dotted line) is $S$.}
       \label{figure:IntersectingCurve}
    \end{figure}
    We denote the components of $\partial N_j\cap S$ by $c_1,c_2$ and those of $\partial N'_j\cap S$ by $c'_1,c'_2$.
    From \cref{figure:IntersectingCurve}, we find the following.
    \begin{claim}\label{claim:ClaimsFromTheFigure}
        \begin{enumerate}
            \item Each connected component of $\partial N_j\cap S$ intersects each connected component of $\partial N'_j\cap S$ at most once.
            \item The curves $c_1, c_2, c'_1$ and $c'_2$ are all non-separating in $S$.
        \end{enumerate}
    \end{claim}
    
    First, we suppose that $S$ is a once-holed torus.
    Then $c_1$ and $c_2$ are isotopic, and the same holds for $c'_1$ and $c'_2$.
    Since the distance in the pants graph between $P$ and $P'$ is bounded above by the distance $d_2$ in the curve graph of $S$ between $\gamma\coloneqq [c_1]=[c_2]$ and $\gamma'=[c'_1]=[c'_2]$.
    By \Cref{claim:ClaimsFromTheFigure} (1), it follows that the geometric intersection number $i(\gamma,\gamma')$ equals $1$. 
    Thus, we have $d_2=1$.

    Next, we suppose that $S$ is a twice-holed torus.
    Then each of the multicurves $\gamma=\{[c_1],[c_2]\}$ and $\gamma'=\{[c'_1],[c'_2]\}$ is a pants decomposition of $S$.
    By \cref{claim:ClaimsFromTheFigure}, up to the action of the mapping class group, $\gamma$ and $\gamma'$ are as shown in \cref{figure:TwiceHoledTorus}.
    \begin{figure}
       \centering
       \begin{overpic}[]{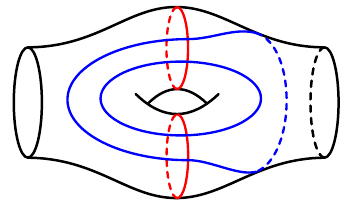}
       \end{overpic}
       \caption{The red multicurve is $\gamma$ and the blue multicurve is $\gamma'$. 
       The distance between the two multicurves is $3$.}
       \label{figure:TwiceHoledTorus}
    \end{figure}
    Now the distance in the pants graph $\mathcal{P}(\Sigma)$ between $P$ and $P'$ is bounded above by the distance $d_3$ in the pants graph of $S$ between $\gamma$ and $\gamma'$, and we find that $d_3=3$.

    Thus, in all cases, the distance in $\mathcal{P}(\Sigma)$ between $P=\mathbf{P}(\mathcal{T},\mathrm{O})$ and $P'=\mathbf{P}(\mathcal{T},\mathrm{O}')$ is at most $3$.
\end{proof}

Therefore, using \Cref{equation:QuadraticUpperBoundsForPantsGraphDistance}, we may restate \Cref{theorem:QuadraticUpperBoundsForPantsGraphDistance} in the following explicit form.
\begin{theorem}
    For any two pants decompositions $P,P'\in \mathcal{P}(\Sigma)$, 
    \begin{equation*}
        d_{\mathcal{P}} (P,P')\leq 6i(P,P')^2-3i(P,P').
    \end{equation*}
\end{theorem}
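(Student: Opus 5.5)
The statement is a direct consequence of the inequality \cref{equation:QuadraticUpperBoundsForPantsGraphDistance} combined with the preceding proposition, which gives the explicit value $C=3$. The plan is to assemble the argument summarized earlier in the appendix and substitute this constant.

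\textbf{Reduction to no common curves.} Take $P,P'$ in minimal position. If they share components, cut $\Sigma$ along the shared curves. On each resulting component $\Sigma_r$, the restrictions $P_r,P'_r$ are pants decompositions with no common curves. A path in $\mathcal{P}(\Sigma_r)$ extends to a path in $\mathcal{P}(\Sigma)$ of the same length by keeping the shared curves fixed. Hence $d_{\mathcal{P}}(P,P')\le\sum_r d_{\mathcal{P}(\Sigma_r)}(P_r,P'_r)$, while $i(P,P')=\sum_r i(P_r,P'_r)$.

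Since $x\mapsto 6x^2-3x$ is superadditive on nonnegative integers ($6(x+y)^2-3(x+y)\ge 6x^2-3x+6y^2-3y$), it suffices to prove the bound on each piece. A component on which $P_r=P'_r$ (a pair of pants, or a piece with $i=0$) contributes $0$ on both sides.

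\textbf{Main case.} Assume $P,P'$ share no curves, and set $i=i(P,P')$. Form the pre-triangulation $\mathcal{T}_{P\cup P'}$. Each intersection point is a vertex of valence $4$, so the number of edges is $4i/2=2i$. This also covers curves of $P$ disjoint from $P'$, which cannot occur here because $P'$ is a pants decomposition with no curve in common with $P$.

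Invoke \cite{lackenby2024bounds} for orderings $\mathrm{O},\mathrm{O}'$ with $\mathbf{P}(\mathcal{T},\mathrm{O})=P$ and $\mathbf{P}(\mathcal{T},\mathrm{O}')=P'$. Any permutation of $2i$ edges is a product of at most $2i(2i-1)/2$ adjacent transpositions. By the proposition, each adjacent transposition moves $\mathbf{P}(\mathcal{T},\cdot)$ by at most $3$ in $\mathcal{P}(\Sigma)$. Summing by the triangle inequality gives
\[
d_{\mathcal{P}}(P,P')\le 3\cdot i(2i-1)=6i^2-3i.
\]

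\textbf{Expected obstacle.} The delicate point is the bookkeeping in the reduction step. One must check that the edge count is at most $2i$ in the restricted surfaces, and that the Lackenby–Yazdi orderings realizing $P$ and $P'$ exist there; I would cite \cite{lackenby2024bounds} for both. Everything else is the arithmetic above.
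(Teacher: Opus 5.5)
Your proposal is correct and follows the paper's argument. The bound comes from substituting $C=3$ from the preceding proposition into $d_{\mathcal{P}}(P,P')\le C\cdot 2i(2i-1)/2$, using the $2i$-edge pre-triangulation $\mathcal{T}_{P\cup P'}$ and the Lackenby--Yazdi orderings that realize $P$ and $P'$. Your superadditivity check, $h(x+y)-h(x)-h(y)=12xy\ge 0$ for $h(x)=6x^2-3x$, usefully makes explicit the reduction to the case with no common curves, which the paper only asserts ``without loss of generality''.
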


\bibliographystyle{alpha}
\bibliography{ElectricTeichmullerAndMulticurveGraph.bib}

\end{document}